\newcommand{\tf}{\textbf}
\journalname{BIT}
\begin{document}
\title{Robust Dropping Criteria for F-norm Minimization Based Sparse Approximate
Inverse Preconditioning\thanks{Supported
by National Basic Research Program of China 2011CB302400 and the
National Science Foundation of China (No. 11071140).}}
\author{Zhongxiao Jia \and
   Qian Zhang}
\institute{Zhongxiao Jia \at
Department of Mathematical Sciences, Tsinghua University, Beijing 100084, People's Republic of China.\\
\email{jiazx@tsinghua.edu.cn}.
\and
Qian Zhang\at
Department of Mathematical Sciences, Tsinghua University, Beijing 100084, People's Republic of China.\\
\email{qianzhang.thu@gmail.com}
}
\date{Received: date / Accepted: date}

\maketitle


\begin{abstract}
Dropping tolerance criteria play a central role in Sparse Approximate
Inverse preconditioning. Such criteria have received, however, little attention and
have been treated heuristically in the following manner: If the size of an entry is
below some empirically small positive quantity, then it is set to zero. The meaning
of "small" is vague and has not been considered rigorously. It has not been clear
how dropping tolerances affect the quality and effectiveness of a preconditioner $M$.
In this paper, we focus on the adaptive Power Sparse Approximate Inverse algorithm
and establish a mathematical theory on robust selection criteria for dropping
tolerances. Using the theory, we derive an adaptive dropping criterion that is used
to drop entries of small magnitude dynamically during the setup process of $M$.
The proposed criterion enables us to make $M$ both as sparse as possible as well as to be
of comparable quality to the potentially denser matrix which is obtained without
dropping. As a byproduct, the theory applies to static F-norm minimization based
preconditioning procedures, and a similar dropping criterion is given that can be
used to sparsify a matrix after it has been computed by a static sparse approximate
inverse procedure.
In contrast to the adaptive procedure, dropping in the static
procedure does not reduce the setup time of the matrix but makes the application
of the sparser
$M$ for Krylov iterations cheaper.
Numerical experiments reported
confirm the theory and illustrate the robustness and effectiveness of the dropping
criteria.

\keywords{Preconditioning\and sparse approximate inverse
\and dropping tolerance selection criteria
\and F-norm minimization\and adaptive\and static}

\subclass{65F10}
\end{abstract}

\section{Introduction}
Preconditioned Krylov subspace methods \cite{saad2003} are
among the most popular iterative solvers for large sparse linear
system of equations
\begin{equation*}
 Ax=b,
\end{equation*}
where $A$ is a nonsingular and nonsymmetric (non-Hermitian)
$n\times n$ matrix and $b$ is an $n$-dimensional vector.
Sparse approximate inverse (SAI) preconditioning aims to construct sparse
approximations of $A^{-1}$ directly and is nowadays one class of important
general-purpose preconditioning for Krylov solvers. There are two typical kinds of SAI
preconditioning approaches. One constructs a factorized sparse
approximate inverse (FSAI). An effective algorithm of this kind is the approximate
inverse (AINV) algorithm, which is derived from the incomplete (bi)conjugation
procedure \cite{benzi1996sparse,benzi1998sparse}.
The other is based on F-norm minimization and is inherently parallelizable.
It aims to construct $M\approx A^{-1}$
by minimizing $\|AM-I\|_F$ for a specified pattern
of $M$ that is either prescribed in advance or determined adaptively,
where $\|\cdot\|_F$ denotes the F-norm of a matrix. A hybrid version, i.e.,
the factorized approximate inverse (FSAI) preconditioning
based on F-norm minimization, has been introduced by Kolotilina
and Yeremin\cite{KOLOTILINA1993}.
FSAI is generalized to block form, called BFSAI
in \cite{janna2010block}. An adaptive algorithm in \cite{janna2011adaptive}
is presented that generates automatically the nonzero
pattern of the BFSAI preconditioner.
In addition, the idea of F-norm minimization is generalized in \cite{holland2005} by
introducing a sparse readily inverted {\em target} matrix $T$. $M$ is then computed
by minimizing $\|AM-T\|_{F,H}$ over a space of matrices with a prescribed sparsity
pattern, where $\|\cdot\|_{F,H}$ is the generalized F-norm defined
by $\|B\|^2_{F,H}=\langle B,B\rangle_{F,H}=trace(B^THB)$ with $H$ being some symmetric
(Hermitian) positive definite matrix, the superscript $T$ denotes the transpose of
a matrix or vector, and is replaced by the conjugate transpose for a complex
matrix $B$. A good comparison of factorized SAI and F-norm minimization
based SAI preconditioning approaches can be found in \cite{benzi1999comparative}.
SAIs have been shown to provide effective smoothers
for multigrid; see, e.g., \cite{Broker200261,broker:1396,sedlacek,Tang2000}.
For a comprehensive survey on preconditioning techniques, we refer the reader to
\cite{benzi2002preconditioning}.

In this paper, we focus on F-norm minimization based SAI preconditioning, where
a central issue is to determine the sparsity pattern of $M$ effectively.
There has been much work on a-priori pattern prescriptions, see, e.g.,
\cite{Benson1982,carpentieri2000,chow2000,huckle99,Tang1999}. Once the pattern of $M$
or its envelop is given, the computation of $M$ is straightforward by solving $n$
independent least squares (LS) problems and $M$ is then further sparsified generally.
This is called a static SAI preconditioning procedure. Huckle \cite{huckle99}
has compared different a-priori sparsity patterns and established effective upper
bounds for the sparsity pattern of $M$ obtained by the famous adaptive SPAI
algorithm \cite{grote1997parallel}. He shows that
the patterns of $(I+A)^k$, $(I +|A|+|A^T|)^k A^T$ and $(A^TA)^k A^T$ for small $k$
can be good envelop patterns of a good $M$. These patterns are very useful for reducing
communication times when distributing
and then computing $M$ in a distributed and parallel computing environment.

For a general sparse matrix $A$, however, determining an effective sparsity pattern
of $A^{-1}$ is nontrivial. A-priori sparse patterns may
not capture positions of large entries in $A^{-1}$ effectively, or, they may
capture the positions only when the patterns
are unacceptably dense.  Then the storage becomes a
bottleneck and the time for the construction of the matrix is impractical.
To cope with this difficulty, a number of researchers have proposed adaptive
strategies that start with a simple initial pattern and
successively augment or adaptively adjust this pattern
until $M$ is satisfied with certain accuracy, i.e.,
$\|AM-I\|\leq\varepsilon$ for some norm,
where $\varepsilon$ is fairly small,
or a maximum number of nonzero entries in $M$ is reached.
This idea was first proposed by Cosgrove {\it et al.} \cite{cosgrove92},
and developed by Grote and Huckle \cite{grote1997parallel}, Gould and
Scott \cite{gould98} and Chow and Saad \cite{chow98}. From \cite{benzi1999comparative}
it appears
that the SPAI preconditioning proposed by Grote and
Huckle \cite{grote1997parallel} is more robust
than the one proposed by Chow and Saad \cite{chow98}. One of the key differences
between these procedures is that they use different adaptive ways to
generate sparsity patterns of $M$ by dropping entries of small
magnitude so as to sparsify $M$. Recently, Jia and Zhu \cite{jia2009power}
have proposed a Power Sparse Approximate Inverse (PSAI) procedure that
determines the sparsity pattern of $M$ in a new adaptive way.
Furthermore, they have developed a practical PSAI
algorithm with dropping, called PSAI($tol$), that dynamically drops
the entries in $M$ whose magnitudes are smaller than a prescribed tolerance $tol$
during the process. Extensive numerical experiments in \cite{jiazhang12}
demonstrate that the PSAI($tol$) is at least comparable to
SPAI in \cite{grote1997parallel}.

As is well-known there are three goals for using dropping strategies in the SAI
preconditioning procedure: (i) $M$ should be an effective preconditioner (ii) $M$
should be as sparse as possible so that it is cheap to set up and then to use in a Krylov
solver, when its pattern is determined adaptively, and (iii) $M$ should be
as sparse as possible so as to be cheap to use in a Krylov solver,
when its sparsity pattern is prescribed.
Apparently, dropping is a key step and plays a central role
in designing a robust SAI preconditioning procedure.
Chow \cite{chow2000} suggests a prefiltration strategy and drops the
entries of $A$ itself that are below some tolerance before determining the
pattern of $M$. This prefiltration idea is also adopted in, e.g.,
\cite{Kaporin1990,Kolotilina1988,Tang1999}. Instead of prefiltration,
it may be more effective to apply the sparsification to $M$ after it has
been computed, which is called postfiltration; see, e.g.,
\cite{carpentieri2000,Tang2000}.
Wang and Zhang \cite{wang} have proposed a multistep static SAI preconditioning
procedure that uses both preliftration and postfiltration.
Obviously, for a static SAI procedure,
postfiltration cannot reduce the construction cost of $M$; rather,
it only reduces the application cost of $M$ at each iteration of a Krylov solver.
For an adaptive SAI procedure, a more effective approach is to dynamically drop
entries of small magnitude as they are generated during the construction process.
The approach is more appealing as it makes
$M$ sparse throughout the whole setup
process. As is clear, dropping is more
important for an adaptive SAI procedure than for a static one since
it reduces the setup time of $M$ for the former but not for the latter.
For sparsification applied to FSAI, we refer
the reader to \cite{Bergamaschi2007,Bergamaschi,Ferronato,Kolotilina1999}.

In this paper, we are concerned with dropping tolerance strategies
applied to the adaptive PSAI procedure.
We have noticed that the dropping tolerances used in the literature
are heuristic and empirical. One commonly takes some small quantities,
say $10^{-3}$, as dropping tolerances.
Nevertheless, the mechanism for dropping tolerances is by no means
so simple. Empirically chosen tolerances are not necessarily robust, may not be effective, and
might even lead to failure in preconditioning. Obviously, improperly chosen large
tolerances may lead to a sparser but ineffective $M$, while tolerances that are too
small may lead to a far denser but more effective preconditioner $M$
which is much more time consuming to apply.
Our experiments confirm these statements, and illustrate that simply taking seemingly
small tolerances, as suggested in the literature, may produce a numerical
singular $M$, which can cause a Krylov solver to fail completely. Therefore, dropping
tolerance selection criteria deserve attention and it is desirable to establish a
mathematical theory that can reveal intrinsic relationships between the dropping
tolerances and the quality of $M$. Such selection criteria enable the design of robust
and effective SAI preconditioning procedures.

We point out that dropping has been extensively used in other important
preconditioning techniques such as ILU factorizations \cite{Chow1997,saad1994}.
Some effective selection criteria have been proposed for dropping tolerances in,
e.g., \cite{bollhofer2003robust,Gupta2010,mayer2006alternative}.
It is distinctive that the setup time of good sparse approximate inverses
overwhelms the cost of Krylov solver iterations while this is not necessarily
the case for ILU preconditioners. This is true in a parallel computing
environment, though SPAI and PSAI($tol$) are
inherently parallelizable. Therefore, SAI type preconditioners
are particularly attractive for solving a sequence of linear systems with the
same coefficient matrix, as has been addressed in the literature, e.g.,
\cite{benzi1998bit}, where BiCGStab preconditioned with the adaptive
SPAI algorithm \cite{grote1997parallel} and
the factorized AINV algorithm \cite{benzi1996sparse,benzi1998sparse}
are experimentally shown to be faster than BiCGStab
preconditioned with ILU(0), even in the sequential computing environment
when more than one linear systems is solved.

The goal of this paper is to analyze and establish a rigorous theory for the dropping
tolerance selection criteria used in PSAI. The quality and non-singularity of $M$
obtained by PSAI depends on, and can be very sensitive to, the dropping tolerances.
Based on our theory, we propose an {\em adaptive} dropping criterion that is used to drop
entries of small magnitude dynamically during the setup process of $M$ by PSAI.
The criterion aims to make $M$ as sparse as possible, while possessing
comparable quality to a possibly much denser $M$ obtained by PSAI without dropping.
As a byproduct, the theory applies to static F-norm minimization based
SAI preconditioning procedures, and a similar dropping criterion is derived
that runs postfiltration robustly after $M$ is computed by a static SAI
procedure, making $M$ and its sparsification of comparable preconditioning quality.
As has been noted already, however, as compared to adaptive SAI procedures, dropping in
static SAI procedures does not reduce the setup time of the preconditioner, rather
it reduces the cost of applying the sparser $M$ in the Krylov iteration.

Our numerical experiments illustrate that the dropping tolerance criteria work well
in general, and that the quality and effectiveness of $M$
depends critically on, and is
sensitive to, these criteria. In particular, the reported numerical results demonstrate
that (i) smaller tolerances are not necessary since they may make $M$ denser and
more time consuming to construct, while not offering essential improvements in the
quality of $M$, and (ii) larger tolerances may lead to a numerically singular $M$ so
that preconditioning fails completely.

The paper is organized as follows. In Section 2, we review the Basic PSAI (BPSAI)
procedure without dropping and the PSAI($tol$) procedure with
dropping \cite{jia2009power}. In Section 3, we present results and establish robust
dropping tolerance selection
criteria. In Section 4, we test PSAI($tol$) on a number of real
world problems, justifying our theory and illustrating the robustness
and effectiveness of our selection criterion for dropping tolerances.
We also test the three static F-norm minimization
based SAI procedures with the patterns of $(I+A)^k$,
$(I+|A|+|A^T|)^k$ and $(A^TA)^kA^T$ and
illustrate the effectiveness of our selection criterion for dropping tolerances.
Finally concluding remarks are presented in Section 5.

\section{PSAI algorithms}

The BPSAI procedure is based on F-norm minimization and determines the sparsity
pattern of $M$ adaptively during the process. According to the Cayley--Hamilton
theorem, $A^{-1}$ can be expressed as a matrix polynomial of $A$ of degree $m-1$
with $m\leq n$:
\begin{equation*}
 A^{-1}=\sum_{i=0}^{m-1}c_iA^i
\end{equation*}
with $A^0=I$, the identity matrix, and $c_i,\ i=0,1,\ldots,m-1$, being certain
constants.

Following \cite{jia2009power}, for $i=0,1,\ldots,m-1$, we shall
denote by $A^i(j,k)$ the entry of $A^i$ in position $(j,k)$,
$j,k=1,2,\ldots,n$, and set
$\mathscr J^k_i=\{j|A^i(j,k)\neq 0\}$. For $l=0,1,\ldots,l_{\max}$, define
$\tilde{\mathscr J^k_l}=\cup_{i=0}^l\mathscr J_i^k$.
Let $M=[\tf{m}_1,\tf{m}_2,\ldots,\tf{m}_n]$ be an approximate inverse
of $A$. BPSAI computes each $\tf{m}_k$, $1\leq k\leq n$,
by solving the LS problem
\begin{equation}\label{lsprob}
 \min_{\tf{m}_k(\tilde{\mathscr J_l^k})} \|A(\cdot, \tilde{\mathscr J_l^k})
\tf{m}_k(\tilde{\mathscr J_l^k})-\tf{e}_k\|_2,\ l=0,1,\ldots,l_{\max},
\end{equation}
where $\|\cdot\|_2$ is the vector 2-norm and the matrix spectral norm
and $\tf{e}_k$ is the $k$th column of the $n\times n$ identity matrix $I$.
We exit and output $\tf{m}_k$ when the minimum in (\ref{lsprob})
is less than a prescribed tolerance $\varepsilon$ or $l$ exceeds $l_{\max}$.
We comment that $\tf{m}_k(\tilde{\mathscr J_{l+1}^k})$ can be updated from the
available $\tf{m}_k(\tilde{\mathscr J_l^k})$ very efficiently; see
\cite{jia2009power} for details. The BPSAI procedure is summarized as
Algorithm \ref{alg1}, in which $\tf{a}_k^l$ denotes the $k$th column of $A^l$
and $\tf{a}_k^0=\tf{e}_k$.
It is easily justified that if $l_{\max}$ steps are performed
then the sparsity pattern of $M$ is contained in that of $(I+A)^{l_{\max}}$.

\begin{algorithm}\caption{The BPSAI Algorithm}\label{alg1}
\algsetup{linenodelimiter=.}
For $k=1,2,\ldots,n$, compute $\tf{m}_k$:
 \begin{algorithmic}[1]
\STATE Set $\tf{m}_k=0, l=0$, $\tf{a}_k^0=\tf{e}_k$ and take $\tilde{\mathscr{J}_0^k}=\{k\}$
as the initial sparsity pattern of  $\tf{m}_k$. Choose an accuracy requirement
$\varepsilon$ and the maximum $l_{\max}$ of outer loops.
\STATE Solve (\ref{lsprob}) for $\tf{m}_k$ and let $\tf{r}_k=A\tf{m}_k-\tf{e}_k$.
\WHILE{$\|\tf{r}_k\|_2>\varepsilon$ and $l\leq l_{\max}-1$}
\STATE $\tf{a}_k^{l+1}=A\tf{a}_k^l$, and augment the set $\tilde{\mathscr J_{l+1}^k}$ by
bringing in the indices of the nonzero entries in $\tf{a}_k^{l+1}$.
\STATE $\hat{\mathscr J}=\tilde{\mathscr J_{l+1}^k}\setminus\tilde{\mathscr J_{l}^k}$.
\IF{$\hat{\mathscr J}=\emptyset$}
\STATE Set $l=l+1$, and go to 3;
\ENDIF
\STATE Set $l=l+1$
\STATE Solve (\ref{lsprob}) for updating $\tf{m}_k$ and $\tf{r}_k=A\tf{m}_k-\tf{e}_k$.
\STATE If $\|\tf{r}_k\|\leq\varepsilon$, then break.
\ENDWHILE
  \end{algorithmic}
\end{algorithm}

It is shown in \cite[Theorem 1]{jia2009power} that if $A$ is sparse
{\em irregularly}, that is, there is at least one column of $A$ whose number of
nonzero entries is considerably more than the average number of nonzero entries
per column, then $M$ may become dense very quickly as $l$ increases. However,
when most entries of $A^{-1}$ are small, the corresponding entries
of a good approximate inverse $M$ for $A^{-1}$
are small too, and thus contribute very little to $A^{-1}$.
Therefore, in order to control the sparsity of $M$ and construct an effective
preconditioner, we should apply dropping strategies to BPSAI.
PSAI($tol$) just serves this purpose. It aims to effectively determine an
approximate sparsity pattern of $A^{-1}$ and capture its large entries.
At each while-loop in PSAI($tol$), for the new available $\tf{m}_k$, entries
of small magnitude
below a prescribed tolerance $tol$ are dropped and only large ones are
retained. We describe the PSAI($tol$) algorithm as Algorithm \ref{alg2}, in which the
sparsity pattern of $\tf{m}_k$ is denoted by $\mathscr S_l^k$, $l=0,1,\ldots,l_{\max}$,
which are updated according to steps 9--11 of Algorithm 2.
Hence, for every $k$, we solve the LS problem
\begin{equation}\label{lsprob2}
 \min_{\tf{m}_k(\mathscr S_l^k)} \|A(\cdot, \mathscr S_l^k)
\tf{m}_k(\mathscr S_l^k)-\tf{e}_k\|_2,\ l=0,1,\ldots,l_{\max}.
\end{equation}
Similar to BPSAI, $\tf{m}_k(\mathscr J_{l+1}^k)$ can be updated from the
available $\tf{m}_k(\mathscr J_l^k)$ very efficiently.

\begin{algorithm}[th]\caption{The PSAI($tol$) Algorithm}\label{alg2}
\algsetup{linenodelimiter=.}
For $k=1,2,\ldots,n$, compute $\tf{m}_k$:
 \begin{algorithmic}[1]
\STATE Set $\tf{m}_k=0, l=0$, $\tf{a}_k^0=\tf{e}_k$ and $\mathscr{S}_0^k
=\tilde{\mathscr{J}_0^k}=\{k\}$ as the initial sparsity pattern of $\tf{m}_k$.
Choose an accuracy requirement $\varepsilon$, dropping tolerance $tol$ and
the maximum $l_{\max}$ of outer loops.
\STATE Solve (\ref{lsprob2}) for $\tf{m}_k$ and let $\tf{r}_k=A\tf{m}_k-\tf{e}_k$.
\WHILE{$\|\tf{r}_k\|_2>\varepsilon$ and $l\leq l_{\max}-1$}
\STATE $\tf{a}_k^{l+1}=A\tf{a}_k^l$, and augment the set $\tilde{\mathscr J_{l+1}^k}$
by bringing in the indices of the nonzero entries in $\tf{a}_k^{l+1}$.
\STATE $\hat{\mathscr J}=\tilde{\mathscr J_{l+1}^k}\setminus\tilde{\mathscr J_{l}^k}$.
\IF{$\hat{\mathscr J}=\emptyset$}
\STATE Set $l=l+1$, and go to 3;
\ENDIF
\STATE $\mathscr S_{l+1}^k=\mathscr S_l^k\cup\hat{\mathscr J}$
\STATE Solve (\ref{lsprob2}) for $\tf{m}_k$ and compute $\tf{r}_k=A\tf{m}_k-\tf{e}_k$.
If $\|\tf{r}_k\|\leq\varepsilon$, perform 11 and break.
\STATE Drop the entries of small magnitude in $\tf{m}_k$ whose sizes are below $tol$ and
delete the corresponding indices from $\mathscr S_{l+1}^k$.
\STATE Set $l=l+1$
\ENDWHILE
\end{algorithmic}
\end{algorithm}

From now on we denote by $M$ the preconditioners generated by either BPSAI or
PSAI($tol$). We will
distinguish them by $M$ and $M_d$, respectively when necessary.
The non-singularity and quality of $M$ by BPSAI clearly depends on $\varepsilon$,
while the situation becomes much more complicated for $M_d$.
We will consider these theoretical issues in the next section. At present it
should be clear that for BPSAI the non-singularity and
quality of $M$ is determined by $\varepsilon$ and $l_{\max}$,
two parameters that control while-loop termination in Algorithm \ref{alg1}.
On the one hand, a smaller $\varepsilon$ will generally give rise to higher quality
but possibly denser preconditioner $M$. As a result, more while-loops $l_{\max}$
are used, so that the setup cost of $M$ is higher. We reiterate that it is also more
expensive to apply a denser $M$ at each iteration of a Krylov solver. On the other hand,
a bigger $\varepsilon$ may generate
a sparser but less effective $M$, so that the Krylov solvers use more
iterations to achieve convergence. Unfortunately the selection of $\varepsilon$
can only be empirical. As is standard in the literature, in numerical experiments
we simply take $\varepsilon$
to be a fairly small quantity, say $0.2\sim 0.4$.

\section{Selection criteria for dropping tolerances}

First of all, we should keep in mind  that all SAI
preconditioning procedures are based on the basic hypothesis
that the majority of entries of $A^{-1}$
are small, that is, there exist sparse approximate inverses of $A$.
Mathematically, this amounts to supposing that there exists (at least)
a sparse $M$ such that the residual $\|AM-I\|\leq\varepsilon$ for
some fairly small $\varepsilon$ and some matrix norm $\|\cdot\|$.
The size of $\varepsilon$ is a reasonable measure
for the quality of $M$ as an approximation to
$A^{-1}$. Generally speaking, the smaller $\varepsilon$, the
more accurate $M$ as an approximation of $A^{-1}$.

In the following discussion, we will assume that
BPSAI produces a nonsingular $M$ satisfying $\|AM-I\|\leq\varepsilon$ for some
norm, given fairly small $\varepsilon$. We comment that this is definitely
achieved for a suitable $l_{\max}$. Under this assumption, keep in mind that
$M$ may be relatively dense but have many entries of small magnitude. PSAI($tol$) aims
at dynamically dropping those entries of small magnitude below some absolute dropping
tolerance $tol$ during the setup of $M$ and computing
a new sparser $M$, so as to reduce storage memory and
computational cost of constructing and applying
$M$ as a preconditioner. We are concerned with
two problems. The first problem is how to select $tol$
to make $M$ nonsingular. As will be seen, since $tol$ varies dynamically
for each $k$, $1\leq k\leq n$, as $l$ increases from $0$ to $l_{\max}$ in
Algorithm \ref{alg2}, we will instead denote it by $tol_k$
when computing the $k$th column $\tf{m}_k$. The second is how to
select the $tol_k$ which are required to meet two requirements: (i) $M$ is as
sparse as possible; (ii) its approximation quality is comparable
to that obtained by BPSAI in the sense that the residuals of two $M$
have very comparable sizes. With such sparser $M$,
it is expected that Krylov solvers preconditioned by BPASI and PSAI($tol$),
respectively, will use a comparable number of
iterations to achieve convergence. If so, PSAI($tol$) will
be considerably more effective than BPSAI provided that $M$ obtained
by PSAI($tol$) is considerably sparser than that provided by BPASI.
As far as we are aware, these important problems have not been
studied rigorously and systematically in the context of SAI preconditioning.
The establishment of robust selection criteria, $tol_k$, $k=1,2,\ldots,n$,
for dropping tolerances that meet the two requirements is significant but nontrivial.

Over the years the dropping reported in the literature has been empirical. One
commonly applies a tolerance as follows: set $m_{jk}$ to zero if $|m_{jk}|<tol$, for
some empirical value for $tol$, such as $10^{-3}$, see, e.g.
\cite{chow98,jia2009power,Tang2000,wang}. Due to the absence of mathematical
theory, doing so is problematic, and one may either miss significant entries if $tol$
is too large or retain too many superfluous entries if $tol$ is too small. As a
consequence, $M$ may be of poor quality, or while
a good approximate inverse it may be unduly denser than desirable, leading to
considerably higher setup and application costs.

For general purposes, we should take the size of $\tf{m}_k$
into account when dropping a small entry $m_{jk}$
in $\tf{m}_k$. Define $\tf{f}_k$ to be the $n$-dimensional vector
whose nonzero entries $f_{jk}=m_{jk}$ are those to be dropped in $\tf{m}_k$.
Precisely drop $m_{jk}$ in $\tf{m}_k$ when
\begin{equation}\label{ineq5}
\frac{\|\tf{f}_k\|}{\|\tf{m}_k\|}\leq\mu_k, \quad  k=1,2,\ldots,n
\end{equation}
for some suitable norm $\|\cdot\|$,
where $\mu_k$ is a relative dropping tolerance that is small and
should be chosen carefully based on some mathematical theory.
 For suitably chosen $\mu_k$, our ultimate goal is to derive corresponding
dropping tolerance selection criteria $tol_k$
that are used to adaptively detect and drop small
entries $m_{jk}$ below the tolerance.

In what follows we establish a number of results that
play a vital role in selecting the $\mu_k$ and $tol_k$ effectively.
The matrix norm $\|\cdot\|$ denotes a general induced matrix norm, which includes
the 1-norm and the 2-norm.

\begin{theorem}\label{thm1}
Assume that $\|AM-I\|\leq\varepsilon<1$. Then $M$ is nonsingular. Define
$M_d=M-F$. If $F$ satisfies
\begin{equation}\label{ineq}
 \|F\|<\frac{1-\varepsilon}{\|A\|},
\end{equation}
then $M_d$ is nonsingular.
\end{theorem}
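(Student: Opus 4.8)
The plan is to reduce both nonsingularity assertions to the standard Neumann-series fact that any square matrix $B$ with $\|I-B\|<1$ is invertible, with inverse $B^{-1}=\sum_{j\geq 0}(I-B)^j$; the series converges precisely because $\|\cdot\|$ is an induced (hence submultiplicative) matrix norm, so that $\|(I-B)^j\|\leq\|I-B\|^j$. Everything in the theorem will follow by applying this fact twice, once to $AM$ and once to $AM_d$, together with a single triangle-inequality estimate.

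First I would dispose of the nonsingularity of $M$. Since $\|I-AM\|=\|AM-I\|\leq\varepsilon<1$, the Neumann-series fact applied to $B=AM$ shows that $AM$ is invertible. Because $A$ and $M$ are both $n\times n$, invertibility of the product $AM$ forces $M$ itself to be nonsingular (a one-sided inverse of a square-matrix product is a genuine two-sided inverse of each factor).

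For the perturbed matrix I would write $AM_d=A(M-F)=AM-AF$ and measure its deviation from the identity as $I-AM_d=(I-AM)+AF$. Applying the triangle inequality and submultiplicativity gives
\begin{equation*}
\|I-AM_d\|\leq\|I-AM\|+\|A\|\,\|F\|\leq\varepsilon+\|A\|\,\|F\|.
\end{equation*}
The hypothesis $\|F\|<\frac{1-\varepsilon}{\|A\|}$ yields $\|A\|\,\|F\|<1-\varepsilon$, and therefore $\|I-AM_d\|<\varepsilon+(1-\varepsilon)=1$. Invoking the Neumann-series fact once more, now with $B=AM_d$, shows that $AM_d$ is invertible, and hence, exactly as before, that $M_d$ is nonsingular.

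I do not expect a genuine obstacle here: the result is a routine perturbation estimate. The only points deserving a moment's care are the passage from invertibility of the products $AM$ and $AM_d$ to invertibility of the square factors $M$ and $M_d$, and the reliance on an \emph{induced} norm to guarantee both submultiplicativity (so that $\|AF\|\leq\|A\|\,\|F\|$) and convergence of the Neumann series — which is precisely why the statement restricts $\|\cdot\|$ to this class of norms.
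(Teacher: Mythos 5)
Your proof is correct, and it takes a genuinely different route from the paper's. For the nonsingularity of $M$ the paper argues by contradiction with a unit vector $\tf{w}$ in the null space (giving $\|AM-I\|\geq\|(AM-I)\tf{w}\|=1$), which is just another face of the Neumann-series fact you invoke. The real divergence is in the treatment of $M_d$: the paper factors $M_d=M(I-M^{-1}F)$ and must then show $\|M^{-1}F\|<1$, which forces it to establish the auxiliary two-sided estimate $(1-\varepsilon)\|M^{-1}\|\leq\|A\|\leq(1+\varepsilon)\|M^{-1}\|$ before substituting into $\|M^{-1}F\|\leq\|M^{-1}\|\,\|F\|$. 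You instead work with $AM_d$ directly, and the single bound $\|I-AM_d\|\leq\varepsilon+\|A\|\,\|F\|<1$ settles everything without ever mentioning $M^{-1}$. Your argument is shorter and, as a bonus, already delivers the residual estimate $\|AM_d-I\|<1$ that the paper only extracts later (Theorem~\ref{thm2} uses exactly your triangle inequality $\|AM-AF-I\|\leq\|AM-I\|+\|A\|\,\|F\|$); the paper's multiplicative decomposition buys the intermediate comparison of $\|A\|$ with $\|M^{-1}\|$, but that inequality is not reused elsewhere, so nothing is lost by your more economical route.
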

\begin{proof} Suppose that $M$ is singular and let $\tf{w}$  with
$\|\tf{w}\|=1$ be an eigenvector associated with its zero eigenvalue(s), i.e.,
$M\tf{w}=0$. Then for any induced matrix norm we have
$$
\|AM-I\|\geq\|(AM-I)\tf{w}\|=\|\tf{w}\|=1,
$$
a contradiction to the assumption that $\|AM-I\|<1$. So $M$ is nonsingular.

Since
\begin{equation}\label{eq1}
 M_d=M-F=M(I-M^{-1}F),
\end{equation}
from (\ref{ineq}) we have
\begin{equation}\label{ineq2}
\|M^{-1}F\|\leq\|M^{-1}\|\|F\|<(1-\varepsilon)\frac{\|M^{-1}\|}{\|A\|}.
\end{equation}
On the other hand, since
$$
|\|A\|-\|M^{-1}\||\leq\|A-M^{-1}\|\leq\|AM-I\|\|M^{-1}\|\leq\varepsilon\|M^{-1}\|,
$$
we get
$$
(1-\varepsilon)\|M^{-1}\|\leq\|A\|\leq (1+\varepsilon)\|M^{-1}\|,
$$
which means
\begin{equation}\label{ineq1}
1-\varepsilon\leq\frac{\|A\|}{\|M^{-1}\|}\leq
1+\varepsilon.
\end{equation}
Substituting (\ref{ineq1}) into (\ref{ineq2}), we have
\begin{equation*}
 \|M^{-1}F\|<1,
\end{equation*}
from which it follows that $I-M^{-1}F$ in (\ref{eq1}) is nonsingular and so is $M_d$.
\end{proof}

Denote by $M_d$ the sparse approximate inverse of $A$
obtained by PSAI($tol$). Then $M_d$ aims to retain
the entries $m_{jk}$ of large magnitude and drop those of small magnitude
in $M$. The entries of small magnitude to be dropped are those nonzero ones in
the matrix $F$. So, $M_d$ is generally sparser than $M$, and the
number of its nonzero entries is equal to that of $M$ minus that of $F$.

In order to get an $M_d$ comparable to $M$ as an approximation to $A^{-1}$, we
need to impose further restrictions on $F$ and $\varepsilon$, as indicated below.

\begin{theorem}\label{thm2}
Assume that $\|AM-I\|\leq\varepsilon<1$. Then $M$ is nonsingular. Let
$M_d=M-F$. If
\begin{equation}\label{tol_min}
 \|F\|\leq\min\left\{\frac{\varepsilon}{\|A\|},\frac{1-\varepsilon}{\|A\|}\right\},
\end{equation}
then $M_d$ is nonsingular and
\begin{equation}\label{Rd}
 \|AM_d-I\|\leq \min\{1,2\varepsilon\}.
\end{equation}
Specifically, if $\varepsilon<0.5$, then
\begin{equation}\label{tol2}
\|F\|\leq \frac{\varepsilon}{\|A\|}.
\end{equation}
and
\begin{equation}\label{smallrd}
 \|AM_d-I\|\leq 2\varepsilon.
\end{equation}
\end{theorem}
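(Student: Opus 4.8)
The plan is to reduce everything to a single residual identity for $M_d$ together with a triangle-inequality estimate, and then to let the two branches of the minimum in (\ref{tol_min}) play their distinct roles. The nonsingularity of $M$ comes for free: it is exactly the first assertion of Theorem \ref{thm1}, established by the eigenvector argument (if $M\tf{w}=0$ with $\|\tf{w}\|=1$, then $\|AM-I\|\ge\|(AM-I)\tf{w}\|=1$, contradicting $\varepsilon<1$). So the genuine content is the residual bound (\ref{Rd}) together with the nonsingularity of $M_d$.

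For the residual bound I would start from
\begin{equation*}
AM_d-I=A(M-F)-I=(AM-I)-AF,
\end{equation*}
and apply the triangle inequality and submultiplicativity of the induced norm to obtain $\|AM_d-I\|\le\|AM-I\|+\|A\|\,\|F\|\le\varepsilon+\|A\|\,\|F\|$. Then I would split on which term of the minimum in (\ref{tol_min}) is active. When $\varepsilon\le\tfrac12$ the binding bound is $\|F\|\le\varepsilon/\|A\|$, so $\|A\|\,\|F\|\le\varepsilon$ and $\|AM_d-I\|\le2\varepsilon\le1$; when $\varepsilon>\tfrac12$ the binding bound is $\|F\|\le(1-\varepsilon)/\|A\|$, so $\|A\|\,\|F\|\le1-\varepsilon$ and $\|AM_d-I\|\le\varepsilon+(1-\varepsilon)=1\le2\varepsilon$. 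In either case $\|AM_d-I\|\le\min\{1,2\varepsilon\}$, which is (\ref{Rd}); the ``specifically'' clause is simply the first of these two cases, where the minimum equals $\varepsilon/\|A\|$, giving (\ref{tol2}) and hence (\ref{smallrd}).

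For the nonsingularity of $M_d$ I see two regimes. When $\varepsilon<\tfrac12$ it is cleanest to reuse the residual bound just obtained: since $\|AM_d-I\|\le2\varepsilon<1$, the same eigenvector argument applied to $M_d$ (if $M_d\tf{w}=0$ then $\|AM_d-I\|\ge1$) forces $M_d$ to be nonsingular, with no separate factorization needed. When $\varepsilon\ge\tfrac12$ the minimum equals $(1-\varepsilon)/\|A\|$, and I would invoke Theorem \ref{thm1} through the factorization $M_d=M(I-M^{-1}F)$, using $\|M^{-1}F\|\le\|M^{-1}\|\,\|F\|$ together with the bound $\|A\|/\|M^{-1}\|\ge1-\varepsilon$ from (\ref{ineq1}) to control $\|M^{-1}F\|$.

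The one genuinely delicate point is this last regime at the boundary $\|F\|=(1-\varepsilon)/\|A\|$ with $\varepsilon\ge\tfrac12$: the estimate above yields only $\|M^{-1}F\|\le1$, which is exactly the borderline where $I-M^{-1}F$ could fail to be invertible, and the residual bound has simultaneously degraded to the uninformative value $1$. I expect this to be the main obstacle, and I would resolve it either by assuming the strict inequality $\|F\|<(1-\varepsilon)/\|A\|$ (so that Theorem \ref{thm1} applies verbatim) or by emphasizing that the case of practical interest is $\varepsilon<\tfrac12$, where both nonsingularity and the sharp residual bound $2\varepsilon$ hold cleanly. Everything else is routine: the minimum in (\ref{tol_min}) is arranged precisely so that the $(1-\varepsilon)/\|A\|$ branch secures invertibility while the $\varepsilon/\|A\|$ branch keeps the residual comparable to $\varepsilon$, and the proof merely verifies that each branch does its job.
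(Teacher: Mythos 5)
Your proof is correct and follows the same basic line as the paper's: the residual bound comes from the identity $AM_d-I=(AM-I)-AF$ plus the triangle inequality, and the paper compresses your two-case split into the single computation $\varepsilon+\min\{\varepsilon,1-\varepsilon\}=\min\{1,2\varepsilon\}$. Where you differ is on the nonsingularity of $M_d$: the paper simply asserts that any $F$ satisfying (\ref{tol_min}) ``must meet (\ref{ineq})'' and invokes Theorem~\ref{thm1}, whereas you prove nonsingularity for $\varepsilon<\tfrac12$ directly from $\|AM_d-I\|\le 2\varepsilon<1$ via the eigenvector argument, which is cleaner and self-contained. More importantly, the delicate point you flag is real and is in fact a small gap in the paper's own proof: when $\varepsilon\ge\tfrac12$ and $\|F\|$ attains the bound $(1-\varepsilon)/\|A\|$, condition (\ref{ineq}) is a \emph{strict} inequality and is not implied by the non-strict (\ref{tol_min}), so Theorem~\ref{thm1} does not apply verbatim and the residual bound degrades to the uninformative value $1$. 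Your two proposed fixes (requiring strictness in that branch, or restricting to $\varepsilon<\tfrac12$) are both reasonable; the paper effectively adopts the second, since immediately after the theorem it assumes $\varepsilon<0.5$ throughout. So your proposal is not only correct but slightly more careful than the published argument.
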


\begin{proof}
The non-singularity of $M$ is already proved in Theorem~\ref{thm1}.
Since $F$ satisfying (\ref{tol_min}) must meet (\ref{ineq}),
the non-singularity of $M_d$ follows from Theorem~\ref{thm1} directly.
From $\|AM-I\|\leq\varepsilon$ and (\ref{tol_min}), we obtain
\begin{eqnarray*}
 \|AM_d-I\|&=&\|AM-AF-I\|\leq\|AM-I\|+\|A\|\|F\|\\
 &\leq&\varepsilon+
 \min\{\varepsilon,1-\varepsilon\}=\min\{1,2\varepsilon\}.
\end{eqnarray*}
(\ref{tol2}) and (\ref{smallrd}) are direct from (\ref{tol_min}) and (\ref{Rd}),
respectively.
\end{proof}

In what follows we always assume that $\varepsilon< 0.5$, so that
(\ref{tol2}) is satisfied and the residual $\|AM_d-I\|\leq 2\varepsilon<1$.
This assumption is purely technical for the brevity and beauty of presentation.
The case that $0.5\leq \varepsilon<1$ can be treated accordingly. The later
theorems can be adapted for this case, but are not considered here.

It is known that $M$ is a good approximation to $A^{-1}$ for a small
$\varepsilon$. This theorem tells us that if dropping tolerances $tol_k$
make $F$ satisfy (\ref{tol2}) then the $M_d$
and $M$ have comparable residuals and are approximate inverses of $A$ with
comparable accuracy, provided that $\varepsilon$ is fairly small.
In this case, we claim that they possess a similar preconditioning quality
for a Krylov solver, and it is expected that the Krylov solver preconditioned by
$M_d$ and $M$, respectively, use a comparable number of iterations to achieve convergence.

In the above results, the assumptions and bounds are determined by matrix norms,
which are thus not directly applicable to meet our goals. To be more practical,
we now present a theorem under the assumption that
$\|A\tf{m}_k-\tf{e}_k\|\leq\varepsilon$ for $k=1,2,\ldots,n$, which
is just the stopping criterion in Algorithms~\ref{alg1}--\ref{alg2}
and the SPAI algorithm \cite{grote1997parallel}, etc., where the norm is the 2-norm.

\begin{theorem}\label{thm3}
For a given vector norm $\|\cdot\|$, let $M=[\tf{m}_1,\tf{m}_2,\ldots,\tf{m}_n]$
satisfy $\|A\tf{m}_k-\tf{e}_k\|\leq\varepsilon< 0.5$ for
$k=1,2,\ldots,n$ and  let $M_d=M-F=[\tf{m}^d_1,\tf{m}^d_2,\ldots,\tf{m}^d_n]$
with $F=[\tf{f}_1,\tf{f}_2,\ldots,\tf{f}_n]$.
If
\begin{equation}\label{fksmall}
\|\tf{f}_k\|\leq\frac{\varepsilon}{\|A\|},\ k=1,2,\ldots,n,
\end{equation}
then
\begin{equation}\label{2eps}
 \|A\tf{m}_k^d-\tf{e}_k\|\leq2\varepsilon.
\end{equation}
\end{theorem}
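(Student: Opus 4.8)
The plan is to treat this as the per-column analogue of Theorem~\ref{thm2} and prove it by a direct triangle-inequality estimate on each residual vector, so no fixed-point or Neumann-series argument is needed here. The starting observation is that dropping the small entries in $\tf{m}_k$ is exactly the operation $\tf{m}_k^d=\tf{m}_k-\tf{f}_k$, so the residual of the sparsified column splits cleanly as
\begin{equation*}
A\tf{m}_k^d-\tf{e}_k=A(\tf{m}_k-\tf{f}_k)-\tf{e}_k=(A\tf{m}_k-\tf{e}_k)-A\tf{f}_k.
\end{equation*}
This is the key algebraic step: it isolates the original (controlled) residual from the perturbation introduced by dropping.

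Next I would apply the triangle inequality in the given vector norm to obtain
\begin{equation*}
\|A\tf{m}_k^d-\tf{e}_k\|\leq\|A\tf{m}_k-\tf{e}_k\|+\|A\tf{f}_k\|.
\end{equation*}
The first term is bounded by $\varepsilon$ directly from the hypothesis $\|A\tf{m}_k-\tf{e}_k\|\leq\varepsilon$. For the second term I would use the compatibility of the induced matrix norm with the underlying vector norm, namely $\|A\tf{f}_k\|\leq\|A\|\,\|\tf{f}_k\|$; this is the only place the induced-norm structure is used, and it is precisely what lets a matrix-norm bound on $A$ interact with a vector-norm bound on the dropped part. Substituting the assumption $\|\tf{f}_k\|\leq\varepsilon/\|A\|$ then gives $\|A\tf{f}_k\|\leq\|A\|\cdot\varepsilon/\|A\|=\varepsilon$.

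Combining the two bounds yields $\|A\tf{m}_k^d-\tf{e}_k\|\leq\varepsilon+\varepsilon=2\varepsilon$, which is the claim. I expect no genuine obstacle in this argument; the whole content is the clean residual splitting followed by two elementary estimates. The only point requiring a moment of care is making sure $\|A\|$ is read as the induced matrix norm associated with the chosen vector norm, so that the compatibility inequality is legitimate and the cancellation of $\|A\|$ in the bound on $\|\tf{f}_k\|$ is exact. This is also what makes the theorem the practical, column-wise counterpart of the matrix-norm statement in Theorem~\ref{thm2}: the per-column condition \eqref{fksmall} is the natural quantity the algorithm can actually monitor, and it delivers the same factor-of-two residual guarantee \eqref{2eps} columnwise.
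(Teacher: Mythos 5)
Your argument is correct and is essentially identical to the paper's proof: the same residual splitting $A\tf{m}_k^d-\tf{e}_k=\tf{r}_k-A\tf{f}_k$, the triangle inequality, and the compatibility bound $\|A\tf{f}_k\|\leq\|A\|\,\|\tf{f}_k\|$ combined with hypothesis (\ref{fksmall}). No discrepancies to report.
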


\begin{proof}
 Let $\tf{r}_k=A\tf{m}_k-\tf{e}_k$.
 Then from $\tf{m}_k^d=\tf{m}_k-\tf{f}_k$ we get
\begin{equation*}
\begin{split}
 \|A\tf{m}_k^d-\tf{e}_k\|&=\|\tf{r}_k-A\tf{f}_k\|\leq\|\tf{r}_k\|+\|A\tf{f}_k\|\\
&\leq\varepsilon+\|A\tf{f}_k\|\leq\varepsilon+\|A\|\|\tf{f}_k\|,
\end{split}
\end{equation*}
from which, with the assumption of the theorem, (\ref{2eps}) holds.
\end{proof}

Still, this theorem does not fit nicely for our use. For the later theoretical and
practical background, we present a mixed norm result, which
is a variant of Theorem~\ref{thm3}.

\begin{theorem}\label{thm5}
Let $M=[\tf{m}_1,\tf{m}_2,\ldots,\tf{m}_n]$ satisfy $\|A\tf{m}_k-\tf{e}_k\|_2
\leq\varepsilon< 0.5$ for
$k=1,2,\ldots,n$ and  let $M_d=M-F=[\tf{m}^d_1,\tf{m}^d_2,\ldots,\tf{m}^d_n]$
with $F=[\tf{f}_1,\tf{f}_2,\ldots,\tf{f}_n]$.
If
\begin{equation}\label{fksmallnorm1}
\|\tf{f}_k\|_1\leq\frac{\varepsilon}{\|A\|_1},\ k=1,2,\ldots,n,
\end{equation}
then
\begin{equation}\label{2epsnorm2}
 \|A\tf{m}_k^d-\tf{e}_k\|_2\leq2\varepsilon.
\end{equation}
\end{theorem}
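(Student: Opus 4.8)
The plan is to follow the same two-step template used in the proof of Theorem~\ref{thm3}: start from the column-wise decomposition $\tf{m}_k^d=\tf{m}_k-\tf{f}_k$, form the perturbed residual, and split it with the triangle inequality. Setting $\tf{r}_k=A\tf{m}_k-\tf{e}_k$, I would first record
\[
\|A\tf{m}_k^d-\tf{e}_k\|_2=\|\tf{r}_k-A\tf{f}_k\|_2\leq\|\tf{r}_k\|_2+\|A\tf{f}_k\|_2\leq\varepsilon+\|A\tf{f}_k\|_2,
\]
where the final step uses the 2-norm hypothesis $\|\tf{r}_k\|_2=\|A\tf{m}_k-\tf{e}_k\|_2\leq\varepsilon$. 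Everything then reduces to showing $\|A\tf{f}_k\|_2\leq\varepsilon$, and this is exactly where the mixed-norm character of the statement enters, since the budget (\ref{fksmallnorm1}) controls $\tf{f}_k$ only in the 1-norm.

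The key step is to bridge the 1-norm hypothesis on $\tf{f}_k$ to a 2-norm bound on $A\tf{f}_k$. The plan is to exploit the elementary vector-norm comparison $\|\tf{x}\|_2\leq\|\tf{x}\|_1$, valid for every vector $\tf{x}$, applied to $\tf{x}=A\tf{f}_k$. This converts the target 2-norm into a 1-norm, after which the induced (maximum absolute column sum) 1-norm of $A$ takes over:
\[
\|A\tf{f}_k\|_2\leq\|A\tf{f}_k\|_1\leq\|A\|_1\,\|\tf{f}_k\|_1\leq\|A\|_1\cdot\frac{\varepsilon}{\|A\|_1}=\varepsilon,
\]
where the last two inequalities are the submultiplicativity of the induced 1-norm and the hypothesis (\ref{fksmallnorm1}). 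Substituting $\|A\tf{f}_k\|_2\leq\varepsilon$ into the displayed chain above yields (\ref{2epsnorm2}).

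I do not expect a genuine obstacle here: the content is entirely mixed-norm bookkeeping, and the single nontrivial ingredient is recognizing that $\|\cdot\|_2\leq\|\cdot\|_1$ for vectors is precisely what lets the 2-norm residual estimate be controlled by the 1-norm dropping budget. The reason this variant merits a separate statement is practical rather than mathematical: the residual stopping criterion in Algorithms~\ref{alg1}--\ref{alg2} is naturally measured in the 2-norm, whereas $\|\tf{f}_k\|_1$, a plain sum of the magnitudes of the dropped entries, is the quantity a dropping rule can monitor cheaply column by column. Pairing a 2-norm residual with a 1-norm drop budget is therefore the exact combination that the subsequent selection criterion for $tol_k$ will use.
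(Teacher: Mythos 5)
Your proposal is correct and follows essentially the same route as the paper: the triangle inequality on $\|\tf{r}_k-A\tf{f}_k\|_2$, the vector-norm comparison $\|A\tf{f}_k\|_2\leq\|A\tf{f}_k\|_1$, and then submultiplicativity of the induced 1-norm together with (\ref{fksmallnorm1}). No gaps.
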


\begin{proof}
 Let $\tf{r}_k=A\tf{m}_k-\tf{e}_k$.
 Then from $\tf{m}_k^d=\tf{m}_k-\tf{f}_k$ we get
\begin{equation*}
\begin{split}
 \|A\tf{m}_k^d-\tf{e}_k\|_2&=\|\tf{r}_k-A\tf{f}_k\|_2\leq\|\tf{r}_k\|_2+\|A\tf{f}_k\|_2
 \leq\|\tf{r}_k\|_2+\|A\tf{f}_k\|_1\\
&\leq\varepsilon+\|A\tf{f}_k\|_1\leq\varepsilon+\|A\|_1\|\tf{f}_k\|_1,
\end{split}
\end{equation*}
from which, with the assumption (\ref{fksmallnorm1}), (\ref{2epsnorm2}) holds.
\end{proof}

Theorem~\ref{thm5} cannot guarantee that $M$ and $M_d$
are nonsingular. In \cite{grote1997parallel}, Grote and Huckle have presented
some theoretical properties of a sparse approximate inverse. Particularly,
for the matrix 1-norm, Theorem 3.1 and Corollary 3.1 of \cite{grote1997parallel}
read as follows when applied to $M$ and $M_d$ defined by Theorem~\ref{thm5}.

\begin{theorem}\label{grotehuckle}
Let $\tf{r}_k=A\tf{m}_k-\tf{e}_k$, $\tf{r}_k^d=A\tf{m}_k^d-\tf{e}_k$ and
$p=\max_{1\leq k\leq n}\{\mbox{ the number of nonzero entries of $\tf{r}_k$}\}$,
$p_d=\max_{1\leq k\leq n}\{\mbox{ the number of nonzero entries of $\tf{r}_k^d$}\}$.
Then if $\|\tf{r}_k\|_2\leq\varepsilon$ and $\|\tf{r}_k^d\|_2\leq2\varepsilon,
\,k=1,2,\ldots,n$, we have
\begin{eqnarray}
\|AM-I\|_1&\leq& \sqrt{p}\varepsilon, \label{mres}\\
\|AM_d-I\|_1&\leq& 2\sqrt{p_d}\varepsilon. \label{mdres}
\end{eqnarray}
Furthermore, if $\sqrt{p}\varepsilon<1$ and
$2\sqrt{p_d}\varepsilon<1$, respectively, $M$ and $M_d$ are nonsingular and
\begin{eqnarray}
\frac{\|M-A^{-1}\|_1}{\|A^{-1}\|_1}&\leq& \sqrt{p}\varepsilon,\label{merror}\\
\frac{\|M_d-A^{-1}\|_1}{\|A^{-1}\|_1}&\leq&2\sqrt{p_d}\varepsilon.\label{mderror}
\end{eqnarray}
\end{theorem}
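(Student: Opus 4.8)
The plan is to reduce everything to the elementary relation between the induced matrix $1$-norm and the columnwise residuals. First I would observe that the $k$th column of $AM-I$ is precisely $\tf{r}_k=A\tf{m}_k-\tf{e}_k$, so that the induced matrix $1$-norm, being the maximum absolute column sum, satisfies $\|AM-I\|_1=\max_{1\leq k\leq n}\|\tf{r}_k\|_1$. Since each $\tf{r}_k$ has at most $p$ nonzero entries, Cauchy--Schwarz applied to those nonzero entries gives $\|\tf{r}_k\|_1\leq\sqrt{p}\,\|\tf{r}_k\|_2\leq\sqrt{p}\,\varepsilon$, which yields (\ref{mres}) at once. Repeating the argument verbatim for $M_d$, using that $\tf{r}_k^d$ has at most $p_d$ nonzeros and $\|\tf{r}_k^d\|_2\leq 2\varepsilon$, delivers (\ref{mdres}).

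Next, for the non-singularity assertions I would argue exactly as in Theorem~\ref{thm1}. If $M$ were singular there would exist $\tf{w}$ with $\|\tf{w}\|_1=1$ and $M\tf{w}=0$; then $(AM-I)\tf{w}=-\tf{w}$, so $\|AM-I\|_1\geq\|(AM-I)\tf{w}\|_1=\|\tf{w}\|_1=1$, contradicting $\|AM-I\|_1\leq\sqrt{p}\,\varepsilon<1$. Hence $M$ is nonsingular, and the identical argument using $\|AM_d-I\|_1\leq 2\sqrt{p_d}\,\varepsilon<1$ settles $M_d$.

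Finally, for the relative error bounds I would exploit the factorization $M-A^{-1}=A^{-1}(AM-I)$. Submultiplicativity of the induced $1$-norm then gives $\|M-A^{-1}\|_1\leq\|A^{-1}\|_1\,\|AM-I\|_1\leq\|A^{-1}\|_1\,\sqrt{p}\,\varepsilon$, and dividing through by $\|A^{-1}\|_1$ produces (\ref{merror}). The same computation with $M_d$ in place of $M$ and the bound (\ref{mdres}) yields (\ref{mderror}).

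Since the result is classical (it is essentially Theorem~3.1 and Corollary~3.1 of \cite{grote1997parallel} specialized to the present $M$ and $M_d$), I do not anticipate a genuine obstacle; the one point requiring care is the sparsity-dependent passage from the $2$-norm to the $1$-norm of each residual column. The constants $\sqrt{p}$ and $\sqrt{p_d}$ stem solely from the number of nonzero components of $\tf{r}_k$ and $\tf{r}_k^d$, so one must count nonzeros of the \emph{residuals} rather than of the columns of $M$ or $M_d$; everything else is standard Neumann-series perturbation theory.
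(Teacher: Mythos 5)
Your proof is correct, and each of its three ingredients is the right one: the column-sum characterization $\|AM-I\|_1=\max_k\|\tf{r}_k\|_1$ combined with the Cauchy--Schwarz bound $\|\tf{r}_k\|_1\leq\sqrt{p}\,\|\tf{r}_k\|_2$ over the (at most $p$) nonzero entries of the \emph{residual} gives (\ref{mres})--(\ref{mdres}); the eigenvector argument (or Neumann series) gives non-singularity from $\|AM-I\|_1<1$; and the factorization $M-A^{-1}=A^{-1}(AM-I)$ with submultiplicativity gives (\ref{merror})--(\ref{mderror}). The one point of comparison to flag is that the paper itself supplies no proof of Theorem~\ref{grotehuckle}: it is imported verbatim as Theorem~3.1 and Corollary~3.1 of \cite{grote1997parallel}, specialized to the $M$ and $M_d$ of Theorem~\ref{thm5}. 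Your write-up therefore does not diverge from the paper so much as fill in what the paper delegates to the reference, and your argument is essentially the original Grote--Huckle one. You were also right to emphasize that $p$ and $p_d$ count nonzeros of $\tf{r}_k$ and $\tf{r}_k^d$ rather than of the columns of $M$ or $M_d$ --- that is exactly the distinction the paper exploits when it argues that $p_d$ is likely smaller than $p$ so that $2\sqrt{p_d}\varepsilon$ remains comparable to $\sqrt{p}\varepsilon$. No gaps.
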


Theorem~\ref{thm5} indicates that given $\varepsilon< 0.5$ and $l_{\max}$,
if the while-loop in BPSAI terminates due to $\|\tf{r}_k\|\leq\varepsilon$
for all $k$ and dropping tolerance $tol_k$ is selected
such that (\ref{fksmallnorm1})
holds, then the corresponding columns of $M_d$ and $M$ are of similar quality
provided that $\varepsilon$ is fairly small. It is noted
in \cite{grote1997parallel} for the SPAI that $p$ is usually
much smaller than $n$. This is also the case for BPSAI and PSAI($tol$).
However, we should realize that such a sufficient condition is very
conservative, as pointed out in \cite{grote1997parallel}.
In practice, for a rather mildly small $\varepsilon$, say $0.3$,
$M$ is rarely singular.

Theorem~\ref{grotehuckle} shows that $M_d$ and $M$ are approximate
inverses of $A$ with similar
accuracy and are expected to have a similar preconditioning quality.
Besides, since $\tf{m}_k$ is generally denser than $\tf{m}_k^d$,
$\tf{r}_k$ is heuristically denser than $\tf{r}_k^d$, i.e.,
$p_d$ is more than likely to be smaller than $p$. Consequently, $2\sqrt{p_d}$ is
comparable to $\sqrt{p}$. This means that the bounds for $M_d$ are close to and
furthermore may not be bigger than the corresponding ones for $M$
in Theorem~\ref{grotehuckle}, so $M_d$ and $M$ are approximations to
$A^{-1}$ with very similar accuracy or quality.

Theorems~\ref{thm2}--\ref{grotehuckle} are fundamental and
relate the quality of $M_d$ to that of $M$ in terms
of $\varepsilon$ quantitatively and explicitly.
They provide necessary ingredients for reasonably selecting
relative dropping tolerance $\mu_k$ in (\ref{ineq5})
to get a possibly much sparser preconditioner
$M_d$ that has a similar preconditioning quality to $M$.
In what follows we present a detailed analysis and propose robust selection criteria
for dropping tolerance $tol_k$.

For given $l_{\max}$, suppose that $M$ obtained by BPSAI is nonsingular
and satisfies $\|A\tf{m}_k-\tf{e}_k\|\leq\varepsilon< 0.5$
for $k=1,2,\ldots,n$. To achieve our goal, the crucial point is
how to combine (\ref{ineq5}) with condition (\ref{fksmallnorm1}) in
Theorem~\ref{thm5} in an organic
and reasonable way. For the 1-norm in (\ref{ineq5}), a unification of
(\ref{ineq5}) and (\ref{fksmallnorm1}) means that
\begin{equation}\label{ineq6}
 \|\tf{f}_k\|_1\leq\mu_k\|\tf{m}_k\|_1 \mbox{ and } \|\tf{f}_k\|_1\leq
 \frac{\varepsilon}{\|A\|_1}
\end{equation}
at every while-loop in PSAI($tol$), where the bound in the first relation is
to be determined and the bound in the second relation
is given explicitly. This is the starting point for the analysis determining
how to drop the small entries $m_{jk}$ in $\tf{m}_k$.


Before proceeding, supposing that $\mu_k$ is given in a disguise, we investigate how to
choose $\tf{f}_k$ to make (\ref{ineq6}) hold. Obviously, it suffices to
drop nonzero $m_{jk}$, $1\leq j\leq n$ in $\tf{m}_k$ as long as its size is no more than
the bounds in (\ref{ineq6}) divided by $nnz(\tf{f}_k)$.
Since $nnz(\tf{f}_k)$ is not known a-priori, in practice we replace it by the currently
available $nnz(\tf{m}_k)$ before dropping, which is an {\em upper bound}
for $nnz(\tf{f}_k)$. Therefore, we should drop an $m_{jk}$ when
it satisfies
\begin{equation}\label{tolk}
|m_{jk}|\leq\frac{\mu_k\|\tf{m}_k\|_1}{nnz(\tf{m}_k)}
\mbox{ and } |m_{jk}|\leq \frac{\varepsilon}
{nnz(\tf{m}_k)\|A\|_1},
\ \ k=1,2,\ldots,n.
\end{equation}
Given (\ref{ineq6}), we comment that each of the above bounds may be
correspondingly conservative as
$nnz(\tf{m}_k)>nnz(\tf{f}_k)$. But it seems hard, if not impossible,
to replace the unknown $nnz(\tf{f}_k)$ by any other better computable
estimate than $nnz(\tf{m}_k)$.

Next we go to our central concern and discuss how to relate $\mu_k$ to
$\varepsilon$ so as to establish a robust selection criterion $tol_k$
for dropping tolerances. Precisely, as (\ref{tolk}) has indicated,
we aim at selecting suitable relative tolerance $\mu_k$ and then
drop entries of small magnitude in $\tf{m}_k$
below $\frac{\mu_k\|\tf{m}_k\|_1}{nnz(\tf{m}_k)}$.
By Theorems~\ref{thm5}--\ref{grotehuckle}, the second bound in (\ref{ineq6})
and its induced bound in (\ref{tolk})
serve to guarantee that $M_d$ has comparable
preconditioning quality to $M$. Therefore, if an $m_{jk}$ satisfies the second
relation in (\ref{tolk}), it should be dropped. Otherwise, if $\mu_k$ satisfies
$$
\mu_k\|\tf{m}_k\|_1>\frac{\varepsilon}{\|A\|_1}
$$
and we use the dropping criterion
$$
tol_k=\frac{\mu_k\|\tf{m}_k\|_1}{nnz(\tf{m}_k)}>
\frac{\varepsilon}{nnz(\tf{m}_k)\|A\|_1}
$$
for some $k$,
we would possibly drop an excessive number of nonzero entries and $M_d$ would be too
sparse. The resulting $M_d$ may mean that (\ref{fksmallnorm1}) is not satisfied and that
$M_d$ is a poor quality preconditioner, possibly also numerically singular, which could
lead to a complete failure of the Krylov solver. Thus larger $tol_k$ should not be selected.

On the other hand, if we chose $\mu_k$ such that
$$
\mu_k\|\tf{m}_k\|_1<\frac{\varepsilon}{\|A\|_1}
$$
and took the dropping criterion
$$
tol_k=\frac{\mu_k\|\tf{m}_k\|_1}{nnz(\tf{m}_k)}<
\frac{\varepsilon}{nnz(\tf{m}_k)\|A\|_1},
$$
Theorem~\ref{thm5} would hold
and the preconditioning quality of $M_d$ would be guaranteed and comparable
to that of $M$. However, Theorems~\ref{thm5}--\ref{grotehuckle}
show that the accuracy of such $M_d$ cannot be improved as approximate
inverses of $A$ as $\mu_k$ and $tol_k$ become smaller. Computationally,
it is crucial to realize that the smaller $tol_k$, generally
the denser $M_d$, leading to an increased setup cost for $M_d$ and
more expensive application of $M_d$ in a Krylov iteration. As a consequence,
such smaller $tol_k$ are not desirable and
may lower the efficiency of constructing $M_d$. Consequently such smaller values for
$tol_k$ should be abandoned.

In view of the above analysis, it is imperative that we find
an optimal balance point. Our above arguments
have suggested an optimal and most effective choice for $\mu_k$:
we should select $\mu_k$ to make two bounds in (\ref{ineq6}) equal:
\begin{equation}\label{mutol}
\mu_k\|\tf{m}_k\|_1=\frac{\varepsilon}{\|A\|_1}.
\end{equation}
From (\ref{tolk}), this selection leads to our ultimate dropping criterion
\begin{equation}\label{muepsilon2}
tol_k=\frac{\varepsilon}{nnz(\tf{m}_k)\|A\|_1}.
\end{equation}

We point out that since (\ref{fksmallnorm1}) is a sufficient but
not necessary condition for (\ref{2epsnorm2}), $tol_k$ defined above is also
sufficient but not necessary for (\ref{2epsnorm2}).
Also, it may be conservative since we replace the smaller true value $nnz(\tf{f}_k)$
by its upper bound $nnz(\tf{m}_k)$ in the denominator. As a result,
$tol_k$ may be considerably smaller than it
should be in an ideal case. We should note that
$\mu_k$ and $tol_k$ are varying parameters during the while-loop
in Algorithm 2 as $nnz(\tf{m}_k)$ changes when the while-loop $l$ increases
from $0$ to $l_{\max}$.

In the literature one commonly uses {\em fixed} dropping
tolerance $tol$ when constructing a SAI preconditioner $M$,
which is, empirically and heuristically, taken as some seemingly
small quantity, say $10^{-2}$, $10^{-3}$ or $10^{-4}$,
without taking $\varepsilon$ into consideration; see, e.g.,
\cite{chow98,jia2009power,Tang2000,wang}. Our theory has indicated that
the non-singularity and preconditioning quality of $M_d$ is critically
dependent and possibly sensitive to the choice of the dropping tolerance.
For fixed tolerances that are larger than that defined by
(\ref{muepsilon2}) for some $k$ during the construction of $M_d$,
we report numerical experiments that indicate that the resulting $M_d$
obtained by PSAI($tol$) can be {\em exactly singular in finite precision arithmetic}.
We also report experiments that show decreasing such large tolerances
by one order of magnitude, can provide high quality and nonsingular $M_d$.
Thus, the robustness and effectiveness of $M_d$ depends directly on the tolerance.

We stress that Theorems~\ref{thm1}--\ref{grotehuckle} hold for a generally given
approximate inverse $M$ of $A$ and do not depend on a specific F-norm minimization
based SAI preconditioning procedure. Note that, for all the static F-norm
minimization based SAI preconditioning procedures,
the high quality $M$ constructed from $A$ itself are often quite dense and
their applications in Krylov solvers can be time consuming. To
improve the overall performance of solving $Ax=b$, one often sparsifies
$M$ after its computation, by using postfiltration on $M$
to obtain a new sparser approximate inverse $M_d$ \cite{carpentieri2000,Tang2000}.
However, as already stated in the introduction, postfiltration itself cannot
reduce the cost of constructing $M$ but can reduce the cost of applying $M$ in Krylov
iterations.

As a byproduct, our theory can be very easily adapted to
a static F-norm minimization based SAI preconditioning procedure.
The difference and simplification is that,
for a static SAI procedure, $\mu_k$ in (\ref{muepsilon2}) and $tol_k$ are fixed for
each $k$ as $\tf{m}_k$ and $nnz(\tf{m}_k)$ are already determined a-priori before
dropping is performed on $M$. Practically, after computing $M$ by a static SAI
procedure, we record $\|A\tf{m}_k-\tf{e}_k\|=\varepsilon_k,\
k=1,2,\ldots,n$ and compute the {\em constants} $nnz(\tf{m}_k)$ for $k=1,2,\ldots,n$.
Assume that $\varepsilon_k<0.5,\ k=1,2,\ldots, n$.
Then by (\ref{tolk}) and (\ref{muepsilon2}) we drop $m_{jk}$ whenever
\begin{equation}\label{staticcriterion}
\mid m_{jk}\mid\leq tol_k=\frac{\varepsilon_k}{nnz(\tf{m}_k)\|A\|_1},
\ j=1,2,\ldots,n.
\end{equation}
In such a way, based on Theorem~\ref{thm5}
we get a new sparser approximate inverse $M_d$ whose $k$th
column $\tf{m}_k^d$ satisfies $\|A\tf{m}_k^d-\tf{e}_k\|\leq 2\varepsilon_k$.
Define $\varepsilon=\max_{k=1,2,\ldots,n}\varepsilon_k$. Then
Theorem~\ref{grotehuckle} holds. So $M_d$ has a similar preconditioning
quality to the generally denser $M$ obtained by the static SAI procedure
without dropping. We reiterate, however, that in contrast to adaptive PSAI($tol$)
 where small entries below a tolerance are dropped immediately when
 they are generated during the while loop of Algorithm 2,
the static SAI procedure does not reduce the setup cost of $M_d$  since it
performs sparsification only after computation of $M$. There is
relatively greater benefit in dropping in adaptive SAI preconditioning.

\section{Numerical experiments}\label{numexp}

In this section we test a number of real
world problems coming from scientific and engineering applications,
which are described in Table \ref{table1}\footnote{All of these matrices are
from the Matrix Market of the National Institute of Standards and Technology at
http://math.nist.gov/MatrixMarket or from the University of Florida Sparse Matrix
Collection at http://www.cise.ufl.edu/research/sparse/matrices/.}. We shall
demonstrate the robustness and effectiveness of our selection criteria
for dropping tolerances applied to PSAI($tol$) and, as a byproduct,
three F-norm minimization based static SAI preconditioning procedures.

The numerical experiments are performed on an Intel(R) Core (TM)2Duo Quad CPU E8400
@ $3.00$GHz processor with main memory 2 GB using Matlab 7.8.0 with the machine
precision $\epsilon_{\rm mach}=2.22\times10^{-16}$ under the Linux operating system.
Preconditioning is from the right except pores\_2, for which we
found that left preconditioning outperforms right preconditioning very considerably.
It appears that the rows of pores\_2s inverse can be
approximated more effectively
than its columns by PSAI($tol$). Krylov solvers employed are BiCGStab and
the restarted GMRES(50) algorithms \cite{barrett1994templates}, and we use
the codes from Matlab 7.8.0. We comment that if the output of iterations
for the code {\sf BiCGStab.m} is $k$, the dimension of the Krylov subspace is $2k$
and BiCGStab performs $2k$ matrix-vector products.
The initial guess is always $x_0=0$, and the
right-hand side $b$ is formed by choosing the solution $x=[1,1,\ldots,1]^T$.
The stopping criterion is
\begin{equation*}
 \frac{\|b-Ax_m\|_2}{\|b\|_2}<10^{-8}, \quad x_m=My_m,
\end{equation*}
where $y_m$ is the approximate solution obtained by BiCGStab or
GMRES(50) applied to the preconditioned linear system $AMy=b$.
We run all the algorithms in a sequential environment. We will observe
that the setup cost for $M$ dominates the entire cost of solving $Ax=b$.
As stressed in the introduction, this is a distinctive feature of
SAI preconditioning procedures even in a distributed parallel environment.

\begin{table}[ht]
\centering{
\caption{The description of test matrices ($n$ is the order of a matrix; $nnz$ is
the number of nonzero entries)}\label{table1}
\begin{tabular}{lccl}
 \hline\noalign{\smallskip}
Matrix&$n$&$nnz$&Description\\
\noalign{\smallskip}\hline\noalign{\smallskip}
epb1      &  14734 & 95053 & Plate-fin heat exchanger\\
fidap024 & 2283 & 48733 & Computational fluid dynamics problem\\
fidap028 & 2603 & 77653 & Computational fluid dynamics problem\\
fidap031 & 3909 & 115299& Computational fluid dynamics problem\\
fidap036 & 3079 & 53851 & Computational fluid dynamics problem\\
nos3 &          960 &  8402 & Biharmonic equation\\
nos6   &        675&   1965 & Poisson equation\\
orsreg\_1   &   2205 &  14133& Oil reservoir simulation. Jacobian Matrix\\
orsirr\_1    &  1030 &   6858  & As ORSREG1, but unnecessary cells coalesced\\
orsirr\_2    &    886 &  5970 & As ORSIRR1, with further coarsening of grid\\
pores\_2    &   1224   & 9613 & Reservoir simulation\\
sherman1    & 1000 &   3750 & Oil reservoir simulation $10\times10\times10$grid\\
sherman2  &   1080 &  23094& Oil reservoir simulation $6\times6\times5$ grid\\
sherman3   &  5005 &  20033& Oil reservoir simulation $35\times11\times13$ grid\\
sherman4   &  1104 &   3786 & Oil reservoir simulation $16\times23\times3$ grid\\
sherman5   &  3312  & 20793& Oil reservoir simulation $16\times23\times3$ grid\\
\noalign{\smallskip}\hline
\end{tabular}}.
\end{table}
\par
In the experiments, we take different $\varepsilon$ and suitably small integer
$l_{\max}$ so as to control the quality of $M$ in Algorithms~\ref{alg1}--\ref{alg2},
i.e., the BPSAI and PSAI($tol$) algorithms, in which the while-loop terminates
when $\|A\tf{m}_k-\tf{e}_k\|_2\leq\varepsilon$
or $l> l_{\max}$.
In all the tables, we use the following notations:
\begin{itemize}
\item $\varepsilon$: the accuracy requirements in Algorithms~\ref{alg1}--\ref{alg2};

\item $l_{\max}$: the maximum while-loops that Algorithms~\ref{alg1}--\ref{alg2}
allow;


\item $iter\_\,b$ and $iter\_\,g$: the iteration numbers of BiCGStab
and GMRES(50), respectively;

\item $spar=\frac{nnz(M)}{nnz(A)}$: the sparsity of
$M$ relative to $A$;

\item $mintol$ and $maxtol$: the minimum and maximum of $tol_k$ defined
by (\ref{muepsilon2}) for $k=1,2,\ldots,n$ and $l=0,1,\ldots,l_{\max}$;

\item $ptime$: the setup time (in second) of $M$;

\item $r_{\max}=\max_{k=1,\ldots,n}\|A\tf{m}_k-\tf{e}_k\|$;

\item $coln$: the number of columns of $M$ that fail to meet the accuracy
requirement $\varepsilon$;

\item $\dagger$: flags convergence not attained within 1000 iterations.

\end{itemize}

We report the results in Tables~\ref{table2}--\ref{static3}.
Our aims are four fold: (i) our selection
criterion (\ref{muepsilon2}) for $tol_k$ works very robustly and effectively
since Krylov solvers preconditioned by PSAI($tol$) and BPSAI use
almost the same iterations, the $tol_k$ smaller than those defined by (\ref{muepsilon2})
are not necessary, rather they increase the total cost of solving linear systems
since they do not improve the preconditioning quality
of $M_d$, increase the setup time of $M_d$ and make $M_d$
become denser. (ii) the quality of $M_d$ depends
on the choice of $tol_k$ critically and an empirically chosen fixed small $tol_k$ may produce
a numerically singular $M_d$. (iii) $tol_k$ of one order smaller than those
in case (ii) may dramatically improve the preconditioning effectiveness of $M_d$.
This means that an empirically chosen $tol$ may fail to produce a good preconditioner.
(iv) As a byproduct, we show that the selection criterion (\ref{staticcriterion})
for $tol_k$ works well for static F-norm minimization SAI
preconditioning procedures with three common prescribed patterns.
We present the results on (i)--(iii) in subsection 4.1 and the results on (iv)
in subsection 4.2, respectively.

\subsection{Results for PSAI($tol$)}

We shall illustrate that our dropping criterion (\ref{muepsilon2}) for $tol_k$
is robust for various parameters $\varepsilon$ and $l_{\max}$.
We will show that for a smaller $\varepsilon$ we need more while loops, and
resulting $M_d$ are denser and cost more to construct, but are more effective for
 accelerating BiCGStab and GMRES(50), that is,
the Krylov solvers use fewer iterations to achieve convergence. We also show
that for fairly small $\varepsilon=0.2,0.3,0.4$, Algorithms~\ref{alg1}--\ref{alg2}
can compute a good sparse approximation $M$ of $A^{-1}$
 with accuracy $\varepsilon$ for small integer $l_{\max}$,
and the maximum $l_{\max}=11$ is needed for $\varepsilon=0.2$.

We summarize the results obtained by the two Krylov solvers with and without PSAI($tol$)
preconditioning in Table~\ref{table2}. We see that the two Krylov solvers without
preconditioning failed to solve most test problems
within 1000 iterations while two Krylov solvers
are accelerated by PSAI($tol$) preconditioning substantially and they
solved all the problems quite successfully except for $\varepsilon=0.4$ and $l_{\max}=5,8$,
where GMRES(50) did not converge for fidap024, fidap036 and sherman3. Particularly,
the Krylov solvers preconditioned by PSAI($tol$) solved sherman2 very quickly
and converged within 10 iterations for three given $\varepsilon=0.2, 0.3, 0.4$, but
they failed to solve the problem when no preconditioning is used.

\begin{table}[!htb]
\centering
 \caption{Convergence results for all the test problems: unpreconditioned ($M=I$)
and PSAI($tol$) procedure with different $\varepsilon$ and $l_{max}$.
Note: when the iterations for BiCGStab are $k$, the dimension of the Krylov
subspace is $2k$.
}\label{table2}
\begin{tabular}{@{}cr@{,~}lcc@{~~}c@{~~}r@{,~}l@{~~}c@{~~}ccc@{~~}c@{~~}r@{,~}l@{~~}c@{~~}c@{}}
\hline\noalign{\smallskip}
&\multicolumn{2}{c}{$M=I$}&&\multicolumn{6}{c}{PSAI($tol$),~$\varepsilon=0.2,l_{\max}=8$}
&&\multicolumn{6}{c}{PSAI($tol$),~$\varepsilon=0.2,l_{\max}=11$}\\
\cline{2-3}\cline{5-10}\cline{12-17}
Matrix & $iter\_b$&$iter\_\,g$&& $spar$&$ptime$ & $iter\_\,b$&$iter\_\,g$&$r_{\max}$&$coln$
&& $spar$&$ptime$ & $iter\_\,b$&$iter\_\,g$&$r_{\max}$&$coln$\\
\noalign{\smallskip}\hline\noalign{\smallskip}
 epb1 & 433&$\dagger$ &&3.20 &112.36 & 120&272 & 0.20 &0&&3.20 &111.36 & 120&272 & 0.20 &0 \\
fidap024 & $\dagger$&$\dagger$ &&8.80 &121.52& 27&40 & 0.20 &0&&8.80 &121.52 & 27&40 & 0.20 &0 \\
fidap028 & $\dagger$&$\dagger$ &&9.97 &423.75 & 31&42 & 0.26 &29&&10.11 &437.36 & 31&41 & 0.20 &0 \\
fidap031 & $\dagger$&$\dagger$ &&6.40 &267.74 & 58&103 & 0.35 &1&&6.41 &269.70 & 58&102 & 0.20 &0 \\
fidap036 & $\dagger$&$\dagger$ &&5.78 &63.88 & 34&48 & 0.20 &0&&5.78 &63.88 & 34&48 & 0.20 &0 \\
nos3 & 213& $\dagger$ &&3.89 &3.72 & 49&98 & 0.20&0&&3.89 &3.72 & 49&98 & 0.20&0 \\
nos6 & $\dagger$&$\dagger$ &&2.73 &0.48 & 19&24 & 0.20&0&&2.73 &0.48 & 19&24 & 0.20&0 \\
orsirr\_1 & $\dagger$&$\dagger$ &&10.15 &7.41 & 15&26 & 0.20 &0&&10.15 &7.41 & 15&26 & 0.20 &0\\
orsirr\_2 & $\dagger$&$\dagger$ &&10.71 &6.53 & 16&25 & 0.20 &0&&10.71 &6.53 & 16&25 & 0.20 &0 \\
orsreg\_1 & 687&346 &&9.16 &19.49 & 18&29 & 0.20 &0&&9.16 &19.49 & 18&29 & 0.20 &0\\
pores\_2 & $\dagger$&$\dagger$&& 17.41&26.38&19&26&0.27&15&&17.66&27.53&19&27&0.20&0 \\
sherman1 & 356&$\dagger$&&6.54 &1.37 & 18&28 & 0.27 &2&&6.58 &1.38 & 18&28 & 0.20 &0 \\
sherman2 &$\dagger$&$\dagger$&&3.40 &6.58 & 4&6 & 0.20 &0&&3.40 &6.58 & 4&6 & 0.20 &0\\
sherman3 & $\dagger$&$\dagger$&&4.86 &10.52 & 81&229 & 0.32 &32&&4.90 &10.71 & 81&228 & 0.20 &0 \\
sherman4 & 101&377 &&3.36 &0.76 & 24&34 & 0.20 &0&&3.36 &0.76 & 24&34 & 0.20 &0 \\
sherman5 & $\dagger$&$\dagger$ &&3.34 &4.89 & 21&30 & 0.20 &0&&3.34 &4.89 & 21&30 & 0.20 &0 \\ \noalign{\smallskip}\hline
\end{tabular}
\vskip .3cm
\begin{tabular}{@{}cr@{,~}lcc@{~~}c@{~~}r@{,~}l@{~~}c@{~~}ccc@{~~}c@{~~}r@{,~}l@{~~}c@{~~}c@{}}
\hline\noalign{\smallskip}
&\multicolumn{2}{c}{$M=I$}&&\multicolumn{6}{c}{PSAI($tol$),~$\varepsilon=0.3,l_{\max}=6$}
&&\multicolumn{6}{c}{PSAI($tol$),~$\varepsilon=0.3,l_{\max}=10$}\\
\cline{2-3}\cline{5-10}\cline{12-17}
Matrix & $iter\_b$&$iter\_\,g$&& $spar$&$ptime$ & $iter\_\,b$&$iter\_\,g$&$r_{\max}$&$coln$
&& $spar$&$ptime$ & $iter\_\,b$&$iter\_\,g$&$r_{\max}$&$coln$\\
\noalign{\smallskip}\hline\noalign{\smallskip}
 epb1 & 433&$\dagger$ &&1.17 &36.71 & 170&408 & 0.30 &0&&1.17 &36.71 & 170&408 & 0.30 &0 \\
fidap024 & $\dagger$&$\dagger$ &&5.22 &34.52 & 46&98 & 0.38 &12&&5.27 &34.91 & 46&97 & 0.30 &0 \\
fidap028 & $\dagger$&$\dagger$ &&5.48 &113.09 & 64&168 & 0.33 &10&&5.50 &117.28 & 64&159 & 0.30 &0 \\
fidap031 & $\dagger$&$\dagger$ &&3.08 &58.39 & 104&387 & 0.56 &2&&3.09 &59.18 & 104&444 & 0.30 &0 \\
fidap036 & $\dagger$&$\dagger$ &&2.51 &12.73 & 69&119 & 0.30 &0&&2.51 &12.73 & 69&119 & 0.30 &0 \\
nos3 & 213& $\dagger$ &&1.65 &1.29 & 69&144 & 0.30 &0&&1.65 &1.29 & 69&144 & 0.30 &0 \\
nos6 & $\dagger$&$\dagger$ &&0.94 &0.20 & 35&37 & 0.30 &0&&0.94 &0.20 & 35&37 & 0.30 &0 \\
orsirr\_1 & $\dagger$&$\dagger$ &&5.36 &3.46 & 25&37 & 0.30 &0&&5.36 &3.46 & 25&37 & 0.30 &0 \\
orsirr\_2 & $\dagger$&$\dagger$ &&5.66 &3.05 & 23&36 & 0.30 &0&&5.66 &3.05 & 23&36 & 0.30 &0 \\
orsreg\_1 & 687&346 &&4.02 &7.31 & 27&47 & 0.30 &0&&4.02 &7.31 & 27&47 & 0.30 &0 \\
pores\_2 & $\dagger$&$\dagger$&& 8.67 & 6.84  & 37& 51&0.51&12&&8.78&7.31&37&50&0.30&0\\
sherman1 & 356&$\dagger$&&2.86 &0.70 & 27&40 & 0.38 &2&&2.89 &0.74 & 27&40 & 0.30 &0 \\
sherman2 &$\dagger$&$\dagger$&&2.74 &4.54 & 4&7 & 0.30 &0&&2.74 &4.54 & 4&7 & 0.30 &0 \\
sherman3 & $\dagger$&$\dagger$&&1.93 &4.89 & 145&627 & 0.35 &34&&1.96 &5.07 & 143&900 & 0.30 &0 \\
sherman4 & 101&377 &&1.25 &0.35 & 34&49 & 0.30 &0&&1.25 &0.35 & 34&49 & 0.30 &0 \\
sherman5 & $\dagger$&$\dagger$ &&1.57 &2.05 & 29&43 & 0.30 &0&&1.57 &2.05 & 29&43 & 0.30 &0 \\
\noalign{\smallskip}\hline
\end{tabular}
\vskip .3cm
\begin{tabular}{@{}cr@{,~}lcc@{~~}c@{~~}r@{,~}l@{~~}c@{~~}ccc@{~~}c@{~~}r@{,~}l@{~~}c@{~~}c@{}}
\hline\noalign{\smallskip}
&\multicolumn{2}{c}{$M=I$}&&\multicolumn{6}{c}{PSAI($tol$),~$\varepsilon=0.4,l_{\max}=5$}
&&\multicolumn{6}{c}{PSAI($tol$),~$\varepsilon=0.4,l_{\max}=8$}\\
\cline{2-3}\cline{5-10}\cline{12-17}
Matrix & $iter\_b$&$iter\_\,g$&& $spar$&$ptime$ & $iter\_\,b$&$iter\_\,g$&$r_{\max}$&$coln$
&& $spar$&$ptime$ & $iter\_\,b$&$iter\_\,g$&$r_{\max}$&$coln$\\
\noalign{\smallskip}\hline\noalign{\smallskip}
 epb1 & 433&$\dagger$ &&0.60 &22.23 & 237&474 & 0.40 &0&&0.60 &22.23 & 237&474
 & 0.40 &0 \\
fidap024 & $\dagger$&$\dagger$ &&3.26 &12.77 & 95&$\dagger$  & 0.42 &6&&3.28 &11.54 & 91
&$\dagger$ & 0.40 &0 \\
fidap028 & $\dagger$&$\dagger$ &&3.33 &37.70 & 99&299 & 0.40 &0&&3.33 &37.70 & 99&299
& 0.40 &0 \\
fidap031 & $\dagger$&$\dagger$ &&1.66 &18.70 & 137&$\dagger$ & 0.65 &2&&1.68 &20.71
& 141&801 & 0.40 &0 \\
fidap036 & $\dagger$&$\dagger$ &&1.76 &5.99 & 85&250 & 0.40 &0&&1.76 &5.99 & 85&250
& 0.40 &0 \\
nos3 & 213& $\dagger$ &&0.50 &0.40 & 106&536 & 0.38 &0&&0.50 &0.40 & 106&536 & 0.38 &0 \\
nos6 & $\dagger$&$\dagger$ &&0.56 &0.14 & 38&44 & 0.40 &0&&0.56 &0.14 & 38&44 & 0.40 &0 \\
orsirr\_1 & $\dagger$&$\dagger$ &&3.19 &1.79 & 37&59 & 0.39 &0&&3.19 &1.79 & 37&59
& 0.39 &0 \\
orsirr\_2 & $\dagger$&$\dagger$ &&3.26 &1.49 & 38&60 & 0.39 &0&&3.26 &1.49 & 38&60
& 0.39 &0 \\
orsreg\_1 & 687&346 &&2.13 &4.02 & 40&67 & 0.38 &0&&2.13 &4.02 & 40&67 & 0.38 &0 \\
pores\_2 & $\dagger$&$\dagger$&&3.53 &1.97&53&146&0.68&3&&3.58&2.04&59&147&0.40&0  \\
sherman1 & 356&$\dagger$&&1.62 &0.49 & 37&60 & 0.43 &1&&1.63 &0.49 & 36&60 & 0.40 &0 \\
sherman2 &$\dagger$&$\dagger$&&2.42 &3.59 & 5&8 & 0.40 &0&&2.42 &3.59 & 5&8 & 0.40 &0 \\
sherman3 & $\dagger$&$\dagger$&&1.15 &3.33 & 201&$\dagger$ & 0.40 &0&&1.15 &3.33 & 201
&$\dagger$ & 0.40 &0 \\
sherman4 & 101&377 &&0.88 &0.25 & 41&59 & 0.40 &0&&0.88 &0.25 & 41&59 & 0.40 &0 \\
sherman5 & $\dagger$&$\dagger$ &&1.18 &1.64 & 35&53 & 0.40 &0&&1.18 &1.64 & 35&53 & 0.40 &0 \\
\noalign{\smallskip}\hline
\end{tabular}
\end{table}
\par

Now we take a closer look at PSAI($tol$). The table shows that for
$\varepsilon=0.2, 0.3, 0.4$, Algorithm~\ref{alg2} used $l_{\max}=11,10,8$ to
attain the accuracy requirements, respectively. If we reduced $l_{\max}$ to
$8, 6, 5$,
there are only very few columns of $M$ for only a few matrices which
do not satisfy the accuracy requirements, but the corresponding $r_{\max}$
are still reasonably small and exceed $\varepsilon$ no more than twice. This
indicates that the corresponding $M$ are still effective preconditioners, as
confirmed by the iterations used, but they are generally less effective than
the corresponding ones obtained by the bigger $l_{\max}$ which guarantee that
$M$ computed by PSAI($tol$) succeeds for very small $l_{\max}$.
Table~\ref{table2} clearly tells us that for a smaller
$\varepsilon$, PSAI($tol$) needs larger $l_{\max}$ for the while loop. But a remarkable
finding is that PSAI($tol$) succeeds for very small $l_{\max}$. Given a rather
mildly small $\varepsilon$ like 0.3 and the generality of test problems,
these experiments suggests that we may well set $l_{\max}=10$ as a default value
in Algorithm~\ref{alg2}.

We observe from Table~\ref{table2} that for each problem the smaller $\varepsilon$
the fewer iterations the two Krylov solvers use. However, in the experiments,
we notice that, for all problems except fidap024, fidap31 and sherman3,
for which GMRES(50) failed when $\varepsilon=0.4$,
and given $\varepsilon$ and $l_{\max}$, setup time of
ptime of M and Krylov iterations only occupy a very small percent.
As we have addressed in the introduction,
this is a typical feature of an effective SAI preconditioning procedure and
has been recognized widely in the
literature, e.g., \cite{benzi1998bit}. This is true even in a parallel
computing environment. Moreover, our Matlab codes have not been optimized, and thus
may give rise to lower performance. Thus,
we do not list the time for the Krylov iterations in Table~\ref{table2}.
With this in mind, we find from Table~\ref{table2} that for
the first five matrices, orsreg\_1 and pores\_2, the sparsity and construction cost of $M$
 increases considerably as $\varepsilon$ decreases. Overall, to tradeoff
 effectiveness and general application, $\varepsilon=0.3$ is a good choice for accuracy
 and the maximum number of while loops in PSAI($tol$) should be
$10$.

Regarding Table~\ref{table2}, we finally point out a very important fact:
for each of the test problems and given three choices for $\varepsilon$, BPSAI
and PSAI($tol$) with our dropping criterion use exactly the same value for $l_{\max}$
to yield preconditioners attaining accuracy $\varepsilon$.
This fact is important because it illustrates that the latter
behaves like the former with the same choice for $l_{\max}$, while obtaining an equally
effective preconditioner at less computational cost for setup.

The next results illustrate three considerations.
First, choosing a smaller $tol_k$ is not required because the resulting $M_d$
is more dense and costs more to set up but
is not necessarily a better preconditioner. Second, for an improperly chosen fixed small $tol$,
that is, $tol>\frac{\varepsilon}{nnz(\tf{m}_k)\|A\|_1}$
at some while-loops of Algorithm~\ref{alg2}, PSAI($tol$)
may produce a numerically singular $M_d$ which will cause the complete failure
of the preconditioning. Third, for a $tol$ that produces a singular $M_d$,
reducing $tol$ by one order of magnitude, will yield an $M_d$
which is a good preconditioner but is less effective than the $M_d$
obtained with $tol_k$ defined by (\ref{muepsilon2}).
This illustrates that choosing a fixed $tol$ empirically is at risk
for generating an ineffective $M_d$.

To illustrate the first consideration, we use the three matrices
orsirr\_1, orsirr\_2 and orsreg\_1 and use PSAI($tol$) with $\varepsilon=0.2$, $l_{\max}=8$
and with $tol_k$ ranging from a little
smaller to considerably smaller than that indicated by (\ref{muepsilon2}).
Specifically, denote the right hand side in (\ref{muepsilon2}) by RHS, then we use
$RHS$, $RHS/2$, $RHS/10$ and $RHS/100$, and
investigate the impact of the choice for the tolerance on the quality, sparsity
and computational cost of setup of $M_d$.
We report the results in Table~\ref{smallmu},
where the tolerance $tol_k=0$ corresponds to the BPSAI procedure. For the three matrices,
as $coln$ in Table~\ref{table2} and $r_{\max}$ in Table~\ref{smallmu} indicate, the
approximate inverses $M$ obtained by PSAI($tol$)
with these different tolerances $tol_k$ and BPSAI
have attained the accuracy $\varepsilon$. For each of these three problems,
we can easily observe that $M$
becomes increasingly denser as $tol_k$ decreases and $M$ is the densest for $tol_k=0$.
However, the preconditioning quality of denser $M$ is not improved, since the
corresponding numbers of
Krylov iterations are almost the same, as shown by $iter\_\,b$ and
$iter\_\,g$. Moreover, we can see that the setup time $ptime$ of $M$
increases as $tol_k$ decreases. For all the other test problems in Table~\ref{table1},
we have also made numerical experiments in the above way. We find that the sparsity and
preconditioning quality of $M$ obtained by PSAI($tol$) with the five $tol_k$
changes very little. This means that our dropping criterion (\ref{muepsilon2}) enables
us to drop entries of small magnitude in $M$ and smaller $tol_k$ does not help any.
Together with Table~\ref{smallmu}, we conclude that our dropping
criterion is effective and robust and it is not necessary to take smaller
$tol_k$ in PSAI($tol$).

\begin{table}
\begin{center}
 \caption{Effects of smaller $tol_k$ for PSAI($tol$) with
 $\varepsilon=0.2$ and $l_{\max}=8.$
  Note: when the iterations for BiCGStab are $k$, the dimension of
the Krylov subspace is $2k$.}\label{smallmu}
\begin{tabular}{ccccccc}\hline\noalign{\smallskip}
&&$tol_k=RHS$&$tol_k=\frac{RHS}{2}$&$tol_k=\frac{RHS}{10}$
&$tol_k=\frac{RHS}{100}$&$tol_k=0$\\
\noalign{\smallskip}\hline\noalign{\smallskip}
\multirow{4}*{orsirr\_1}& $spar$ & 10.15 & 10.81 & 12.02 & 13.13 & 16.77\\
& $ptime$ & 7.41 & 7.60 & 8.34 & 8.56 & 9.05 \\
& $iter\_\,b,iter\_\,g$ & 15, ~26 & 15, ~26 &15,~26 & 15,~26 &15, ~26\\
& $r_{\max}$ & 0.199974 & 0.199971 &0.199970 &0.199970&0.199970 \\
\noalign{\smallskip}\noalign{\smallskip}
\multirow{4}*{orsirr\_2}& $spar$ & 10.71 & 11.29 & 12.42 & 13.52 &16.70\\
& $ptime$ & 6.53 & 6.58 & 6.68 & 7.45 & 8.11 \\
& $iter\_\,b,iter\_\,g$ & 16,~25& 16,~25 &14,~25 & 14,~25&14,~25 \\
& $r_{\max}$ &0.199974 & 0.199970 &0.199970& 0.199970&0.199970 \\
\noalign{\smallskip}\noalign{\smallskip}
\multirow{4}*{orsreg\_1}& $spar$ & 9.16 & 9.63 & 11.27 & 12.97 &16.82\\
& $ptime$ & 19.49 & 20.42 & 23.35 & 25.84&35.47 \\
& $iter\_\,b,iter\_\,g$ & 18,~29 &18,~29 &18,~29 & 18,~29 &18,~29\\
& $r_{\max}$ &0.199853 & 0.199841  &0.199840 & 0.199839&0.199839 \\
\noalign{\smallskip}\hline
\end{tabular}
\end{center}
\end{table}

\begin{table}[!htb]
\begin{center}
\caption{Sensitivity of the quality of $M$ to fixed dropping tolerance $tol$}
\label{tolsensitive}
\vskip 0.6cm
 \begin{tabular}{ccccccc}
\hline\noalign{\smallskip}
 &nos3&nos6&orsirr\_1&orsirr\_2&orsreg\_1&sherman5\\
 \noalign{\smallskip}\hline\noalign{\smallskip}
$tol$&$10^{-2}$&$10^{-6}$&$10^{-3}$&$10^{-3}$&$10^{-3}$&$10^{-2}$\\
$r_{\max}$&3.00&1.93&285.17&71.34&23.12&24.71\\
$mintol$&$1.61\times10^{-6}$&$3.97\times10^{-10}$&$8.48\times10^{-10}$&
$8.48\times10^{-10}$&$1.44\times10^{-8}$&$2.10\times10^{-7}$\\
$maxtol$&$4.34\times10^{-5}$&$6.25\times10^{-9}$&$8.80\times10^{-8}$&
$1.17\times10^{-7}$&$1.39\times10^{-6}$ &$7.91\times10^{-6}$\\
\noalign{\smallskip}\hline
 \end{tabular}\vskip 0.2cm
\centerline{(a):\quad Bad $tol$ resulting in numerically singular $M$
for $\varepsilon=0.2, l_{\max}=8$ }
\vskip 0.4cm
 \begin{tabular}{ccccccc}
\hline\noalign{\smallskip}
Matrix&$tol$&$ptime$&$spar$&$iter\_\,b$&$iter\_\,g$&$r_{\max}$\\
\noalign{\smallskip}\hline\noalign{\smallskip}
nos3& $10^{-3}$& 3.53 & 0.67 & 162& $\dagger$&0.68\\
nos6& $10^{-7}$& 0.47 & 1.78 & 79&72&0.73  \\
orsirr\_1& $10^{-4}$&7.09 & 2.31 & 21& 34&14.11  \\
orsirr\_2& $10^{-4}$&6.32 & 2.62 & 41& 50&14.11 \\
orsreg\_1& $10^{-4}$&17.86 & 3.05 & 25& 39&2.33\\
sherman5 & $10^{-3}$&4.64 & 1.72 & 22& 32&4.14 \\
\noalign{\smallskip}\hline
 \end{tabular}\vskip 0.2cm
\centerline{(b):\quad Good $tol$ leading to effective $M$ for
$\varepsilon=0.2, l_{\max}=8$.}
\end{center}
\end{table}

\begin{figure}\label{fig1}
\begin{center}
\includegraphics[width=14cm]{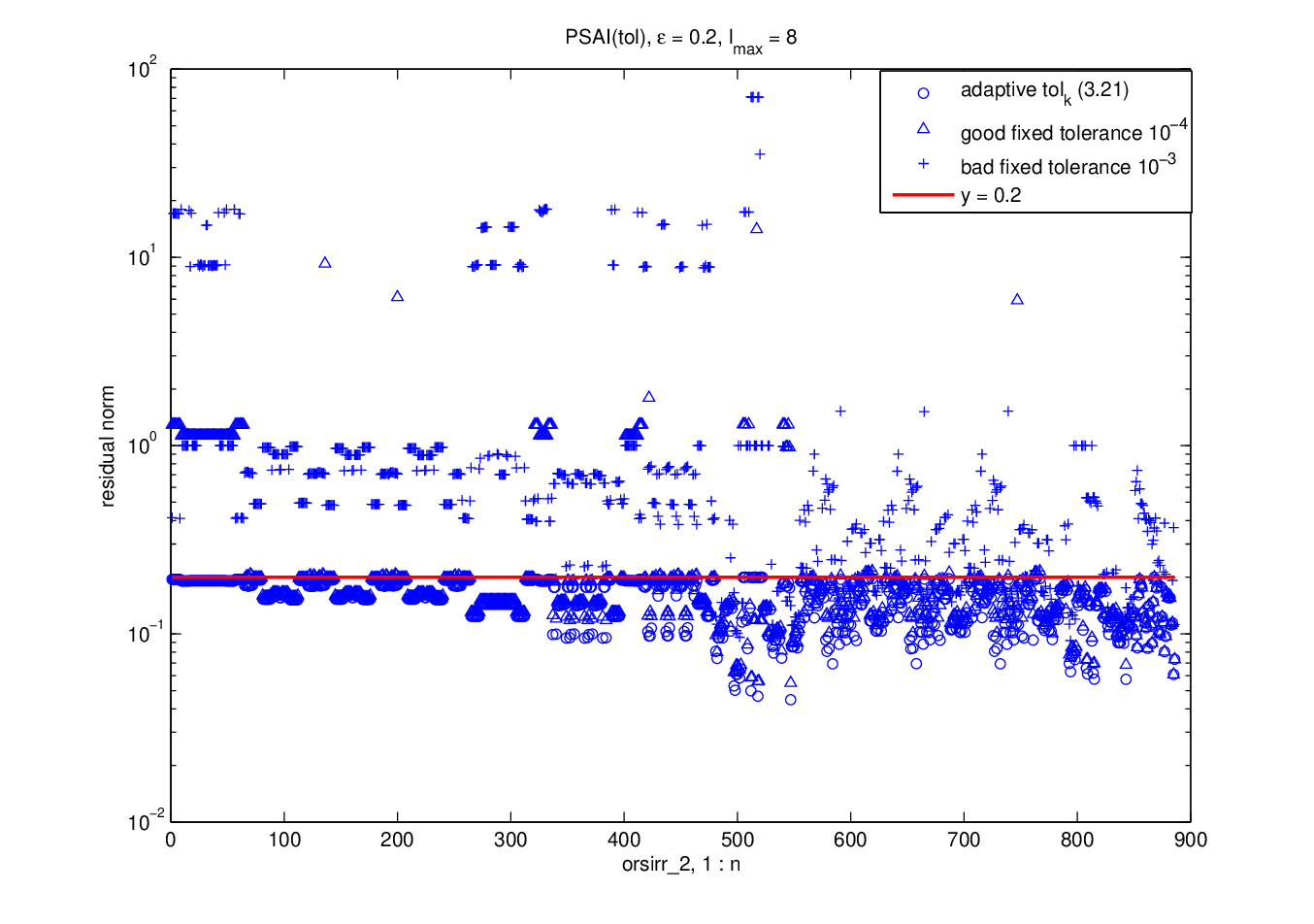}
\caption{Column residual norms of $M$ for orsirr\_2 obtained by PSAI($tol$)
with bad and good fixed $tol$ and adaptive $tol_k$ defined by (\ref{muepsilon2})}
\end{center}
\end{figure}
\par

To illustrate the second and third consideration, we investigate the behavior of $M$
obtained by PSAI($tol$) for improperly chosen dropping tolerance $tol$ that
seems small intuitively. We attempt to show that a choice of fixed $tol$ that is apparently
small, but bigger than that defined by (\ref{muepsilon2}) for some $k$ may produce
a numerically singular $M$. Specifically, we take
$$
tol>\frac{\varepsilon}
{nnz(\tf{m}_k)\|A\|_1},
$$
in the while-loop of Algorithm~\ref{alg2}, where the right-hand side
is just our dropping tolerance (\ref{muepsilon2}). We drop the entries whose sizes
are below such improper $tol$. Table~\ref{tolsensitive}(a) lists the matrices,
each with the dropping tolerance $tol$ that leads to a numerically singular $M$
for $\varepsilon=0.2,\ l_{\max}=8$. The $mintol$ and $maxtol$ in
Table~\ref{tolsensitive}(a) denote the minimum and maximum
of $tol_k$ defined by (\ref{muepsilon2}). However, if we decrease the tolerance
$tol$ by one order of magnitude, we will obtain good preconditioners; see
Table~\ref{tolsensitive}(b) for details.
We emphasize that for the given $\varepsilon$ and $l_{\max}$ and all the matrices
in Table~\ref{tolsensitive}(b), PSAI($tol$) with dropping criterion (\ref{muepsilon2})
has computed the sparse approximations $M$ of $A^{-1}$ with the desired
accuracy $\varepsilon$, as shown in Table~\ref{table2}.

We see from Table~\ref{tolsensitive} (a) that the maximum residual
$r_{\max}$ for each problem is not small at all for the chosen bad fixed dropping
tolerance $tol$. On the other hand, Table~\ref{tolsensitive} (b) indicates
that the one order reduction of $tol$ results in essential improvements on the effectiveness
of preconditioners, not only delivering nonsingular $M$ but also accelerating
the convergence considerably. These tests indicate that the non-singularity
and quality of $M_d$ obtained by PSAI($tol$) can be very sensitive to the choice
of dropping tolerance $tol$. However,
compared with the corresponding results for $\varepsilon=0.2,l_{\max}=8$
on the same test problems in Table~\ref{tolsensitive} (b) and Table \ref{table2},
we find that the preconditioner obtained by PSAI($tol$) with the good fixed tolerance $tol$
is not so effective as that with $tol_k$ defined by (\ref{muepsilon2}),
as shown by values of $iter\_b$ and $iter\_g$. Indeed, the preconditioners obtained
by fixed tolerance $tol$ do not satisfy
the accuracy $\varepsilon$, as $r_{\max}$ indicate.

To be more illustrative, for orsirr\_2 we depict the residual norms
$\|A\tf{m}_k-\tf{e}_k\|, k=1,2, \ldots,n$ of three such $M$ obtained by PSAI($tol$)
with the adaptive $tol_k$ defined by (\ref{muepsilon2}) and bad to good fixed
$tol=10^{-3},\  10^{-4}$; see Figure~\ref{fig1}, where
the solid line $y=\varepsilon=0.2$ parallel to the $x$-axis denotes
our accuracy requirement, the circle `$\circ$', the plus `$+$' and
the triangle `$\vartriangle$' are $\|A\tf{m}_k-\tf{e}_k\|, k=1,2, \ldots,n$
of each $M$. We find from the figure that all the circles `$\circ$ fall
below the solid line,
meaning that PSAI($tol$) with $tol_k$ defined by (\ref{muepsilon2}) computes
all the columns of $M$ with desired accuracy; many `$+$' reside
above the solid line and some of them are far away from $\varepsilon=0.2$
and can be up to $10\sim 100$,
indicating that $M$ obtained by PSAI($tol$) is very bad and
of poor quality for preconditioning; most of the triangles `$\vartriangle$'
are below $\varepsilon=0.2$, and a small part of them is above it, revealing that
$M$ is improved very substantially but is not so good like $M$ computed
by PSAI($tol$) with $tol_k$ defined by (\ref{muepsilon2}).

Table~\ref{tolsensitive} and Figure~\ref{fig1}
tell us that empirically chosen tolerances
are problematic and susceptible to failure.
In contrast, Tables~\ref{table2}--\ref{tolsensitive} demonstrate
that our selection criterion (\ref{muepsilon2}) is very robust
for PSAI($tol$).

\subsection{Results for three static SAI procedures}

As an application of our theory, in this subsection,
we test the static F-norm minimization based SAI
preconditioning procedures with the three popular patterns of $(I+A)^3$,
$(I+|A|+|A^T|)^3A^T$ and $(AA^T)^2A^T$, respectively;
see \cite{huckle99} for the effectiveness of these patterns. We
attempt to show the effectiveness of dropping criterion
(\ref{staticcriterion}) and exhibit the sensitiveness of the
preconditioning quality of $M$ to dropping tolerances $tol_k$.
We first compute $M$ by predetermining its pattern and
solving $n$ independent LS problems,
and then get a sparser $M_d$ by dropping the entries of small magnitude in $M$
below the tolerance defined by (\ref{staticcriterion}) or some empirically chosen
ones.

\begin{table}[ht]
\begin{center}
\caption{Sensitivity of the quality of $M_d$ to some fixed
$tol$ for the static SAI procedure with the pattern of $(I+A)^3$.
Note: when the iterations for BiCGStab are $k$, the dimension of the Krylov
subspace is $2k$.
}
\label{saisensitive}
\vskip 0.6cm
 \begin{tabular}{cccccc}
\hline\noalign{\smallskip}
 &orsirr\_1&orsirr\_2&orsreg\_1&pores\_2&sherman5\\ \noalign{\smallskip}
 \hline\noalign{\smallskip}
$tol$&$10^{-5}$&$10^{-5}$&$10^{-3}$&$10^{-6}$&$10^{-2}$\\
$r_{\max}$&1.32&1.00&15.2&18.0&24.7\\
$mintol$&$1.37\times10^{-9}$&$1.37\times10^{-9}$&$4.93\times10^{-8}$&
$6.53\times10^{-12}$&$1.63\times10^{-7}$\\
$maxtol$&$2.64\times10^{-8}$&$2.23\times10^{-8}$&$3.52\times10^{-7}$&
$1.55\times10^{-10}$&$2.37\times10^{-5}$\\
\noalign{\smallskip}\hline
 \end{tabular}\vskip 0.2cm
\centerline{(a):\quad Bad $tol$ resulting in numerically singular $M_d$ }
\vskip 0.4cm
 \begin{tabular}{ccccccc}
\hline\noalign{\smallskip}
Matrix&$tol$&$ptime$&$spar$&$iter\_\,b$&$iter\_\,g$&$r_{\max}$\\
\noalign{\smallskip}\hline\noalign{\smallskip}
orsirr\_1& $10^{-6}$&1.63 & 1.82 & 33& 50&0.42  \\
orsirr\_2& $10^{-6}$&1.31 & 2.69 & 32& 48&0.42 \\
orsreg\_1& $10^{-4}$&4.09 & 0.91 & 45& 74&1.32\\
pores\_2& $10^{-7}$ &3.10& 2.47&124&158&1.74\\
sherman5 & $10^{-3}$&11.79 & 1.55 & 24& 34&3.76 \\
 \noalign{\smallskip}\hline
 \end{tabular}\vskip 0.2cm
\centerline{(b):\quad Good $tol$ leading to effective $M_d$}
\end{center}
\end{table}
\par

We summarize the results in Tables~\ref{saisensitive}--\ref{static3},
where $ptime$ includes the time for predetermination of the pattern of $M$,
the computation of $M$ and the sparsification of $M$,
and $stime\_b$ and $stime\_g$ denote the CPU time in second of
BiCGStab and GMRES(50) applied to solve the preconditioned
linear systems. We observed that there are some columns whose residual norms
$\|A\tf{m}_k-\tf{e}_k\|=\varepsilon_k$ are very small (some are
at the level of $\epsilon_{\rm mach}$). Therefore, to drop entries of small
magnitude as many as possible, we replace those $\varepsilon_k$
below $0.1$ by $0.1$ in (\ref{staticcriterion}).

We test the static SAI procedure with the pattern of $(I+A)^3$.
Table~\ref{saisensitive}(a) lists the matrices, each with the fixed tolerance
$tol$ leading to a numerically singular $M$ and Table \ref{saisensitive}(b)
exhibits the good performance of $M_d$ generated from the static SAI
by decreasing the corresponding $tol$ in Table~\ref{saisensitive} (a)
by one order of magnitude.  Tables~\ref{static1}--\ref{static3} show the
results obtained by the three static SAI
procedures with dropping criterion (\ref{staticcriterion}).

\begin{table}[ht]
\begin{center}
\caption{Static SAI procedure with the pattern of $(I+A)^3$.
Note: when the iterations for BiCGStab are $k$, the dimension of the Krylov
subspace is $2k$.
}\label{static1}
 \begin{tabular}{ccccccccc}
\hline\noalign{\smallskip}
\multicolumn{2}{c}{}&$ptime$&$spar$&$iter\_\,b$&$iter\_\,g$&$stime\_\,b$&$stime\_\,g$&$r_{\max}$\\
\noalign{\smallskip}\hline\noalign{\smallskip}
\multirow{2}*{orsirr\_1}& $M$ & 1.65 & 8.36 & 29& 45& 0.03& 0.08&0.42 \\
& $M_d$ &1.78& 4.54 & 29& 45 & 0.01& 0.06&0.42\\ \noalign{\smallskip}\noalign{\smallskip}
\multirow{2}*{orsirr\_2}& $M$ & 1.48 & 8.62 & 30& 44 & 0.02& 0.04&0.42 \\
& $M_d$ &1.56& 5.24 & 30& 44 & 0.02& 0.03&0.42\\ \noalign{\smallskip}\noalign{\smallskip}
\multirow{2}*{orsreg\_1}& $M$ & 4.39 & 7.53 & 28& 51& 0.04& 0.09&0.42 \\
& $M_d$ &4.69& 2.95 & 33& 51 & 0.01& 0.08&0.42 \\ \noalign{\smallskip}\noalign{\smallskip}
\multirow{2}*{pores\_2}& $M$ & 2.74 & 9.25 & 52& 118 & 0.09& 0.14&0.94\\
& $M_d$ &3.00& 4.98 &52& 118&0.06& 0.13&0.94\\ \noalign{\smallskip}\noalign{\smallskip}
\multirow{2}*{sherman5}& $M$ & 13.26 & 8.39 & 22& 31 & 0.04& 0.05&0.32 \\
& $M_d$ &14.07& 3.54 & 22& 31 & 0.02& 0.04&0.32 \\
 \noalign{\smallskip}\hline
 \end{tabular}
\end{center}
\end{table}

\begin{figure}\label{fig2}
 \begin{center}
  \includegraphics[width=14cm]{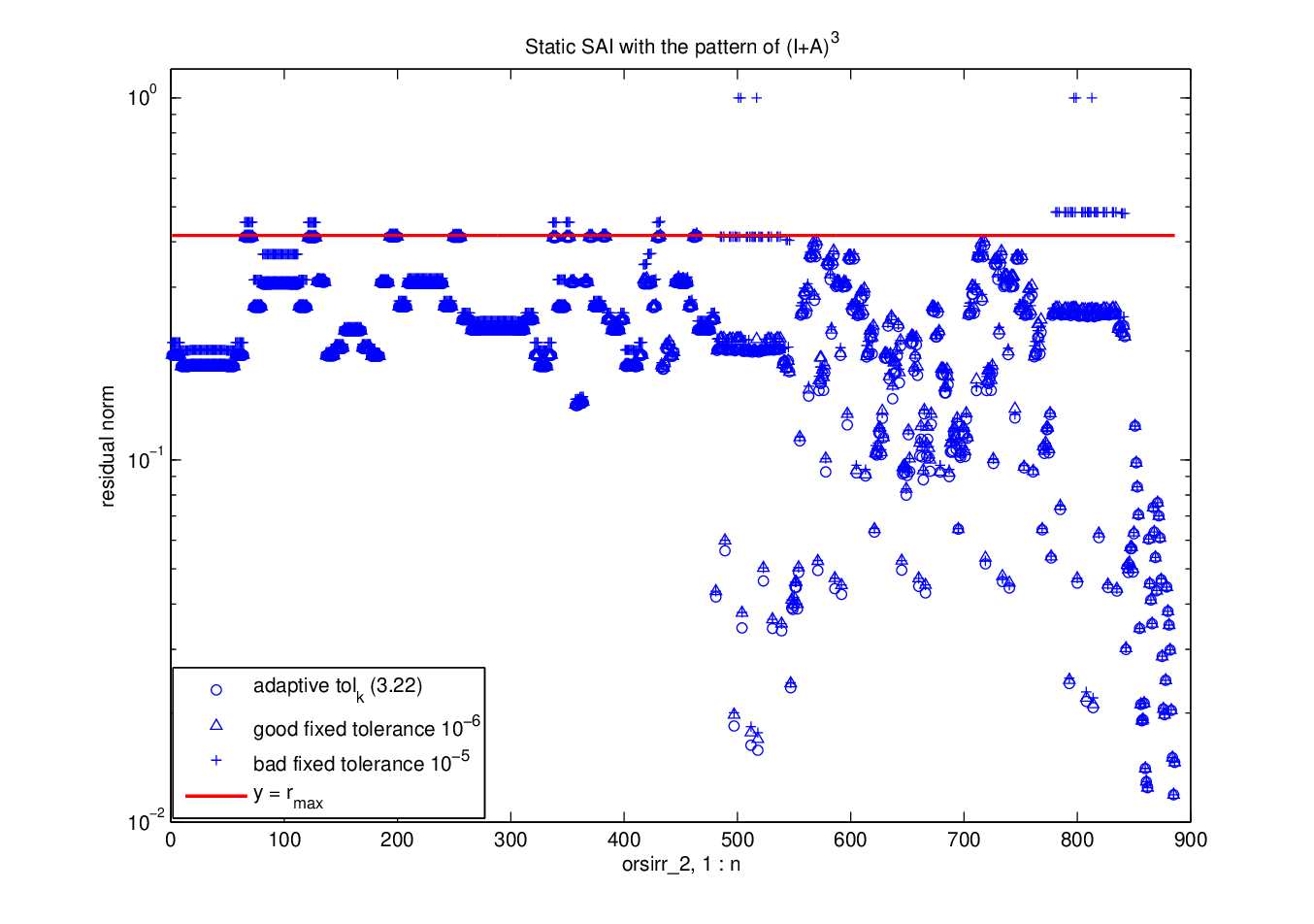}
  \caption{Column residual norms of $M_d$ for orsirr\_2 obtained by the static SAI
with bad and good fixed $tol$ and $tol_k$ defined by (\ref{staticcriterion})}
 \end{center}
\end{figure}
\begin{table}[ht]
\begin{center}
\caption{Static SAI procedure with the pattern of $(I+|A|+|A^T|)^3A^T$.
Note: when the iterations for BiCGStab are $k$, the dimension of the Krylov subspace
is $2k$.
}
\label{static2}
 \begin{tabular}{ccccccccc}
\hline\noalign{\smallskip}
\multicolumn{2}{c}{}&$ptime$&$spar$&$iter\_\,b$&$iter\_\,g$&$stime\_\,b$&$stime\_\,g$&$r_{\max}$\\
\noalign{\smallskip}\hline\noalign{\smallskip}
\multirow{2}*{orsirr\_1}& $M$ & 5.45 & 16.41 & 18& 28 & 0.03& 0.04&0.32\\
& $M_d$ &6.00& 10.06 & 18& 28 & 0.01& 0.03&0.32 \\ \noalign{\smallskip}\noalign{\smallskip}
\multirow{2}*{orsirr\_2}& $M$ & 4.64 & 17.03 & 18& 28 & 0.02& 0.05&0.32 \\
& $M_d$ &5.19& 11.23 & 16& 28 & 0.01& 0.02&0.32\\ \noalign{\smallskip}\noalign{\smallskip}
\multirow{2}*{orsreg\_1}& $M$ & 14.86 & 14.03 & 19& 34& 0.05& 0.06&0.34\\
& $M_d$ &16.82& 6.83 & 19& 34 & 0.02& 0.05&0.34\\ \noalign{\smallskip}\noalign{\smallskip}
\multirow{2}*{pores\_2}& $M$ & 11.02 & 18.42 & 26& 38& 0.05& 0.07&0.86\\
& $M_d$ &13.25& 11.91 &26& 38&0.05& 0.06&0.86\\ \noalign{\smallskip}\noalign{\smallskip}
\multirow{2}*{sherman5}& $M$ & 52.36 & 14.94 & 16& 23& 0.06& 0.06&0.25 \\
& $M_d$ &56.50& 6.41 & 16& 23 & 0.02& 0.04&0.25\\
\noalign{\smallskip}\hline
 \end{tabular}
\end{center}
\end{table}
\begin{table}[htb]
\begin{center}
\caption{Static SAI procedure with the pattern of $(A^TA)^2A^T$.
Note: when the iterations for BiCGStab are $k$, the dimension of the
Krylov subspace is $2k$.
}
\label{static3}
 \begin{tabular}{ccccccccc}
\hline\noalign{\smallskip}
\multicolumn{2}{c}{}&$ptime$&$spar$&$iter\_\,b$&$iter\_\,g$&$stime\_\,b$&$stime\_\,g$&$r_{\max}$\\
\noalign{\smallskip}\hline\noalign{\smallskip}
\multirow{2}*{orsirr\_1}& $M$ & 15.37 & 27.79 & 13& 20 & 0.02& 0.02&0.24\\
& $M_d$ &18.26& 18.39 & 13& 20 & 0.01& 0.02&0.24 \\ \noalign{\smallskip}\noalign{\smallskip}
\multirow{2}*{orsirr\_2}& $M$ & 12.11 & 28.84 & 14& 19& 0.02& 0.03&0.24\\
& $M_d$ &14.20 &20.26& 14& 19 & 0.02& 0.02&0.24\\ \noalign{\smallskip}\noalign{\smallskip}
\multirow{2}*{orsreg\_1}& $M$ &49.42 & 22.77 & 14& 24 & 0.05& 0.04&0.31\\
& $M_d$ &57.22& 12.75 & 14& 24 & 0.02& 0.04&0.31 \\ \noalign{\smallskip}\noalign{\smallskip}
\multirow{2}*{pores\_2}& $M$ & 41.83 & 30.84 & 16& 26& 0.06& 0.05&0.68\\
& $M_d$ &49.71& 19.46 &16& 26&0.05& 0.04&0.68\\ \noalign{\smallskip}\noalign{\smallskip}
\multirow{2}*{sherman5}& $M$ & 129.25 & 22.53 & 14& 19& 0.06& 0.06&0.20 \\
& $M_d$ &138.41& 9.09 & 14& 19 & 0.02& 0.03&0.20 \\
 \noalign{\smallskip}\hline
 \end{tabular}
\end{center}
\end{table}
\par

Singular $M_d$ as in Table~\ref{saisensitive} (a) lead to complete failure of
preconditioning. We also see from the table that all the maximum residuals
$r_{\max}$ of $M_d$ for the five matrices are not small, meaning that the
$M_d$ are definitely ineffective for preconditioning. But Table~\ref{saisensitive} (b)
shows that the situation is improved drastically when the corresponding tolerances
$tol$ are decreased only by one order of magnitude.

Similar to PSAI($tol$), the quality of static SAI
preconditioners depends on, and can be very sensitive to, the dropping tolerances.
Figure~\ref{fig2} depicts the residual norms of three $M_d$ obtained
by the static SAI procedure with the pattern of $(I+A)^3$ using the bad $tol=10^{-5}$,
the good $tol=10^{-6}$ and our criterion (\ref{staticcriterion}), which are
denoted by the plus `$+$', the triangle `$\vartriangle$ and circle
`$\circ$', respectively, and the solid line $y=r_{\max}$ parallel
to the $x$-axis is the maximum column residual norm of $M_d$ obtained with
 (\ref{staticcriterion}). We see from the figure
that $M_d$ constructed with (\ref{staticcriterion}) and the good
fixed tolerance $tol=10^{-6}$ are fairly good but the former one is more effective
than the latter one, since the triangles `$\vartriangle$' are either
indistinguishable with or a little bit higher than the corresponding
circles `$\circ$'. Such effectiveness is also reflected in the values of
$iter\_b$ and $iter\_g$ in Table~\ref{saisensitive} and
Table~\ref{static1}. In contrast, $M_d$ obtained with the tolerance $tol=10^{-5}$ has
many columns, which are poorer than those of $M_d$ obtained with (\ref{staticcriterion}),
since the `$+$' are above the corresponding `$\circ$', and it
has some columns whose residual norms reside above the solid line $y=r_{\max}$.

For Tables~\ref{static1}--\ref{static3}, we see that each $M_d$ is
sparser than the corresponding $M$ and it is cheaper to apply $M_d$ than $M$
in Krylov solvers, as $stime\_b$ and $stime\_g$ indicate.
Furthermore, for each matrix, since we use (\ref{staticcriterion}) to only
drop the entries of small magnitude, two $r_{\max}$ corresponding
to each pair $M$ and $M_d$ are approximately the same and they are fairly small.
So it is expected that
each $M_d$ and the corresponding $M$ have very similar accelerating quality.
This is indeed the case, because for all the problems but orsreg\_2, each Krylov
solver preconditioned by $M_d$ and the corresponding $M$ uses exactly the the same
number of iterations to achieve convergence. For orsreg\_2 in Table~\ref{static1},
BiCGSTab preconditioned by $M_d$ uses only three more iterations
than it preconditioned by $M$. These results demonstrate
that our selection criterion (\ref{staticcriterion}) is
effective and robust. Compared with Table~\ref{saisensitive},
we see from Table~\ref{static1} that the SAI preconditioning with our criterion
(\ref{staticcriterion}) is more effective than that with
the good fixed tolerance $tol=10^{-6}$, since the maximum residual norms $r_{\max}$ for the
former are always not bigger than those for the latter and
the Krylov solvers preconditioned by the former used fewer iterations to achieve convergence.
In addition, we notice from Table~\ref{static3}
that the pattern of $(A^TA)^2A^T$ leads to considerably denser $M$
and $M_d$ that are good approximate inverses but are much more
expensive to compute, compared with the other two patterns.
Therefore, as far as the overall performance is concerned, this static
SAI procedure is less effective than the other two.

\section{Conclusions}

Selection criteria for dropping tolerances are vital to SAI preconditioning.
However, this important problem has received little attention and
never been studied rigorously and systematically. For F-norm minimization
based SAI preconditioning, such criteria
affect the non-singularity, the quality and effectiveness of a preconditioner $M$.
An improper choice of dropping tolerance
may produce a numerically singular $M$,
causing the complete failure in preconditioning, or may produce
a good but denser $M$ possibly at more cost for setup and application.
To develop a robust PSAI($tol$) preconditioning procedure,
we have analyzed the effects of dropping tolerances on
the non-singularity, quality and effectiveness of preconditioners.
We have established some important and intimate relationships
between them. Based on them, we have proposed adaptive robust
selection criteria for dropping tolerances that can make $M$
as sparse as possible and of comparable quality to those obtained by BPSAI, so
that it is possible to lower the cost of setup and application.
The theory on selection criteria has been adapted to static F-norm minimization
based SAI preconditioning procedures. Numerical experiments have shown that
our criteria work very well. However, we point out that it is more
important and beneficial to perform dropping in the adaptive PSAI preconditioning
procedure than a static SAI one.

For general purposes and effectiveness, robust selection criteria for dropping
tolerances also play a key role in other {\em adaptive} F-norm minimization
based SAI preconditioning procedures whenever dropping is used.
Just like for PSAI($tol$), dropping criteria serve two
purposes, one of which is to make an approximate inverse $M$
as sparse as possible and the other is to guarantee its comparable
preconditioning quality to that obtained from
SAI procedure without dropping. For adaptive factorized sparse
approximate inverse preconditioning, such as AINV type algorithms
\cite{benzi2002preconditioning,benzi1998bit},
dropping is equally important. Different from F-norm minimization based
SAI preconditioning, the non-singularity
of the factorized $M$ is guaranteed naturally. Nonetheless,
how to drop entries of small magnitude is nontrivial and has not yet been well studied.
All of these are significant and are topics for further consideration.

\begin{acknowledgements}
We thank the anonymous referees very much for their valuable suggestions and
comments that helped us improve presentation of the paper substantially.
\end{acknowledgements}


\end{document}